\documentclass[a4paper,11pt,reqno] {amsart}

\usepackage[utf8]{inputenc}
\usepackage[english]{babel}
\usepackage{hyperref}
\usepackage{amsmath}
\usepackage{amsthm}
\usepackage{amsfonts}
\usepackage{amssymb}
\usepackage{enumerate}
\usepackage{enumitem}
\usepackage{xcolor}
\usepackage{graphicx}
\usepackage{mathrsfs}

\numberwithin{equation}{section}
\allowdisplaybreaks

\reversemarginpar
\theoremstyle{plain}
\newtheorem{theorem}{Theorem}[section]
\newtheorem*{theoremA}{Theorem A}
\newtheorem*{theoremB}{Theorem B}
\newtheorem*{question1}{Question 1}
\newtheorem*{question2}{Question 2} \newtheorem*{question3}{Question 3}
\newtheorem*{question4}{Question 4}
\newtheorem*{question5}{Question 5}

\newtheorem{proposition}[theorem]{Proposition}
\newtheorem{lemma}[theorem]{Lemma}
\newtheorem{corollary}[theorem]{Corollary}
\theoremstyle{definition}
\newtheorem{definition}[theorem]{Definition}
\newtheorem{remark}[theorem]{Remark}
\newtheorem{example}[theorem]{Example}

\newcommand{\N}{\mathbb{N}}
\newcommand{\C}{\mathbb{C}}

\newcommand{\pa}[1]{\left( #1\right)}
\newcommand{\co}[1]{\left[ #1\right]}

\newcommand{\frF}{\mathfrak{F}}
\newcommand{\frR}{\mathfrak{R}}
\newcommand{\p}{{p_*}} 
\newcommand{\q}{{q_*}}  
\newcommand{\s}{\sigma}
\newcommand{\oneq}{\frac 1\q}



\newcommand{\abs}[1]{\lvert #1\rvert}

\newcommand{\sm}{\setminus}

\newcommand{\tn}[1]{\textnormal{#1}}

\newcommand{\dist}{\operatorname{dist}}



\newcommand{\CC}{\mathcal{C}} 
\newcommand{\DD}{\mathcal{D}}

\newcommand{\eps}{\epsilon}

\newcommand{\Om}{\Omega}
\newcommand{\al}{\alpha}
\newcommand{\be}{\beta}
\newcommand{\de}{\delta}
\newcommand{\la}{\lambda}
\newcommand{\si}{\sigma}

 \newcommand{\R}{\mathbb{R}}


\renewcommand{\epsilon}{\varepsilon}
\renewcommand{\phi}{\varphi}

\renewcommand{\ss}{\subset}
\renewcommand{\kappa}{\varkappa} 

\newcommand{\Li}{L}


\begin{document}

\title[Estimates of growth of subharmonic and analytic functions] 
{Self-improving estimates of growth \\ of 
subharmonic and analytic functions}

\author[G. Bello]{Glenier Bello}
\address{Departamento de Matem\'{a}ticas e
Instituto Universitario de Matem\'{a}ticas y Aplicaciones\\
Universidad de Zaragoza\\
50009 Zaragoza\\
Spain}
\email{gbello@unizar.es}
\author[D. V. Yakubovich]{Dmitry Yakubovich}
\address{
Departamento de Matem\'aticas\\
Universidad Aut\'onoma de Madrid\\
Cantoblanco, 28049 Madrid, Spain}
\email {dmitry.yakubovich@uam.es}
\date{\today}

\subjclass[2020]{30A10, 31B05}
\keywords{Estimates of subharmonic functions, estimates of analytic functions, 	
boundary growth}

\begin{abstract}
Given a bounded open subset $\Om$ and closed subsets $A,B$ 
of $\R^k$, we discuss when an estimate 
$u(x)\le g(\dist(x,A\cup B))$, $x\in\Omega\sm(A\cup B)$, 
for a function $u$ subharmonic on $\Om\sm B$, 
implies that 
$u(x)\le h(\dist(x,B))$,  $x\in\Omega\sm B$, 
where $g,h:(0,\infty)\to (0,\infty)$ are decreasing functions and $g(0^+)=h(0^+)=\infty$. We seek for explicit expressions of $h$ in terms of $g$. We give some results of this type and show that Domar's work \cite{Domar} permits one to deduce other results in this direction. Then we compare these two approaches. Similar results are deduced for estimates of analytic functions. 
\end{abstract}
\thanks{Both authors acknowledge the support of the Spanish Ministry for Science and Innovation under Grant PID2022-137294NB-I00, DGI-FEDER.  G. Bello has also been partially supported by  PID2022-138342NB-I00 for \emph{Functional Analysis Techniques in Approximation Theory and Applications (TAFPAA)} and by Project E48\_23R, D.G. Arag\'{o}n}
\maketitle

\section{Introduction}
The main question addressed in this paper is as follows. 
Assume  $\Om\subset \R^k$ is a bounded open set, $A\subset \Om$ is compact and $B\subset \R^k$ is closed. 
Let $u:\Om\sm B\to[-\infty,\infty)$ be a subharmonic function that satisfies an estimate 
\begin{equation}\label{eq:estim-u1}
	u(x)\le g(\dist(x,A\cup B)), \quad x\in\Omega\sm(A\cup B), 
\end{equation}
where $g:(0,+\infty)\to (0,+\infty)$ is a 
strictly decreasing function with $g(0^+)=+\infty$. 
We wish to deduce an estimate 
\begin{equation}\label{eq:estim-u-dist-B}
u(x)\le h(\dist(x,B)),  
\end{equation}
where the function $h:(0,+\infty)\to (0,+\infty)$ is defined explicitly in terms of $g$. The first part of the paper (Sections~\ref{Sec:our-main-results} and \ref{Sec:proof-2.4}) is devoted to this problem.  

Theorem~\ref{thm:mainP1}, together with Remark~\ref{rem:thm-2.4-regular-g}, show that whenever $A$ is a Lipschitz curve and $g$ is regular and 
its singularity at $0^+$ is of power type, \eqref{eq:estim-u1} implies that for some constant $C>0$,  
\begin{equation}\label{eq:est-gC-dist}
u(x)\le g(C\dist(x,B)), \quad x\in \Om\setminus B.  
\end{equation} 
If $g(s)$ is regular and its growth at $0^+$ 
is slower than any power $s^{-\eps}$, $\eps>0$, then 
the estimate~\eqref{eq:est-gC-dist} is stronger than an estimate  $u(x)\le Cg(\dist(x,B))$. 

The above-mentioned results, applied to domains $\Om$ in the complex plane, allow us to get results on estimates for analytic functions. In particular, Theorem~\ref{thm:mainP1} implies that 
for an analytic function $f:\Om\to\C$, the estimate  $|f(z)|\le C(\dist(z,A\cap B))^{-\beta}$, where $A\subset \Om$ is a compact Lipschitz curve implies that 
$|f(z)|\le C_1(\dist(z,B))^{-\beta}$, for some constant $C_1$ and the same exponent $\beta>0$. See Corollary~\ref{cor-analytic-f} below.

The second part of the paper (which starts with 
Section~\ref{Sec:quant-vers-Domar}) 
is devoted to the discussion of the results by Domar~\cite{Domar} and obtaining their quantitative versions.  We show that these later results give an alternative approach to the above problem.

Domar's setting is as follows. 
Given a nonnegative upper semicontinuous function $F\colon\Omega\to(-\infty,+\infty]$, where $\Om\ss\R^k$, 
denote by $\CC_F$ the class of all subharmonic functions $u\colon\Omega\to\R$ 
such that $u\le F$ and set 
\begin{equation}\label{eq:M(x)}
M(x):=\sup\{u(x):u\in\CC_F\}. 
\end{equation}
Domar addresses the question whether $M(x)$ is bounded on compact subsets of $\Om$. 
It is well-known that it is true if and only if $M$ is subharmonic. 
For $k=2$, this question is related to normality of the family of analytic functions $f$ in a domain $\Om$ in the complex plane, defined by the  inequality $\log |f|\le F$.

Domar gives two theorems in this direction. 
Under the same assumptions as Domar, in Section~\ref{Sec:quant-vers-Domar} we give  estimates of the form 
\begin{equation}\label{eq:estim-M}
M(x)\le h(\dist(x,\partial\Om)),  
\end{equation}
where $h$ is a function explicitly written down in terms of $F$. This is done by following Domar's arguments.

In Section~\ref{Sec:comparison}, we formulate and prove Theorem~\ref{thm-Domar-A-B}, which gives an answer to the main question of the paper under the assumption that $A$ is $\p$-admissible for some $\p<k$ 
(see Definition~\ref{def-p-adm} below).  Roughly speaking, this means that $A$ is not too massive, in particular, its Minkowski dimension has to be less than $k$. If the Assouad dimension of $A$ is less than $\p<k$, then $A$ is $\p$-admissible. 

Theorem~\ref{thm-Domar-A-B} is obtained by applying our quantitative versions of Domar's results. 

In several places, we find out that a regularity  assumption on the growth of $g$ permits us to formulate simpler results. This assumption is that $g(1/s)$ belongs to a Hardy field (see Section~\ref{Sec:our-main-results} for a definition). 

Theorem~\ref{thm-Domar-A-B} applies to functions $g$ that may have much stronger singularity at $0^+$ than the singularity of power type. 
In Section~\ref{Sec:comparison}, we will prove 
Theorem~\ref{thm-Domar-s^-b}, which is a particular case of 
Theorem~\ref{thm-Domar-A-B} for regular functions $g$ 
whose singularity at $0^+$ is at most of power type. 
For these functions $g$, it asserts that whenever $A$ is $\p$-admissible and $\p<k$,~\eqref{eq:estim-u1} implies that 
for any $a>1$, there exists $C\in (0,1)$ such that 
\begin{equation}
	\label{estim-cg0}
	u(x)\le a g(C\dist(x,B)), \quad x\in \Om\sm B.  
\end{equation}

It is interesting to compare Theorem~\ref{thm:mainP1}, obtained by a direct estimate, with Theorem~\ref{thm-Domar-A-B}, which follows from Domar's methods. On one hand, Theorem~\ref{thm-Domar-A-B}
allows much higher growth of $g$ at $0$ than Theorem~\ref{thm:mainP1} and a much more general class of sets $A$. On the other hand, for the case when $A$ is a Lipschitz curve and the singularity of $g(s)$ at $0^+$ is weaker than 
$s^{-\eps}$ for any $\eps>0$, Theorem~\ref{thm:mainP1} gives a more exact estimate, because~\eqref{estim-cg0}  is weaker than ~\eqref{eq:est-gC-dist}.

At the end of Section~\ref{Sec:comparison}, we give a list of open questions.  

Section~\ref{Sec:Appendix} (Appendix) contains a proof of a technical lemma, which is close to Domar's arguments.  

It should be noted that there is some resemblence between our main problem and the classical problem of describing removable sets for analytic, harmonic or suharmonic functions. The most famous of these problems is probably the Painlev\'e problem of describing removable singularities of bounded analytic functions, solved by Tolsa in \cite{Tolsa-2003}, with remarkable previous contributions by David \cite{David1998} and Melnikov and Verdera~\cite{Melnikov-Verdera}, among  others. 
We refer to Bjorn~\cite{Bjorn2002} 
for a treatment of a variant of the problem for analytic functions in $H^p$. 
The problem of removable singularities for harmonic functions comes back to Carleson~\cite{Carleson63}. The results by  
Nazarov, Tolsa and Volberg  \cite{NazarovTolsaVolberg2014} in a sense generalize the solution of the Painlev\'e problem on analytic functions to harmonic functions in $n$ dimensions. 
We refer to reviews~\cite{NazarovTolsaVolberg2014} and 
~\cite{Tolsa-review2006} for an account of these fields. 

See Gardiner \cite{Gardiner1991} and Riihentaus  \cite{Riihentaus2000} for results on the corresponding problem for subharmonic functions and 
\cite{Singman1984} for results on $n$-harmonic functions.

Despite the resemblence, the setting of all these papers differs from ours. Indeed, 
we assume {\it a priori} that our function $u$ is subharmonic (or analytic) on $A\sm B$, whereas in all these works the main question is whether this fact can be deduced from the assumptions on the growth of $u$ near~$A$. Our assumptions on $A$, typically, do not imply that $A$ is removable for the corresponding class of functions. 

\

The authors thank Istv\'an Prause and Tuomas Orponen for drawing our attention to the relation between $\p$-admissible sets and the Assouad dimension.

\section{Our main results}\label{Sec:our-main-results}

Through this paper we consider fixed a dimension $k\ge2$. Let  $p,q$ be positive integers with $p+q=k$. For any $x\in\R^k$ let us write $x=(x',x'')$ with $x'\in\R^p$ and $x''\in\R^q$. 
Given $m\in\N$, $x\in\R^m$ and $r>0$, we denote by $B_m(x,r)$ the open ball in $\R^m$ centered at $x$ with radius $r$. 
For any $x\in\R^k$ and any $r,s>0$, we define the (open) cylinder 
\[
\CC(x,r,s):=B_p(x',r)\times B_q(x'',s). 
\]
For any $a\in\R^k$, let $T_a\colon\R^k\to\R^k$ denote the translation $T_ax=x+a$. We denote by $|\cdot|$ the norm in Euclidean spaces $\R^n$ and by $m$ the Lebesgue measure in $\R^k$. 

We recall that a function $\phi: V\to \R^q$, where 
$V\subset \R^p$, is said to be {\it of Lipschitz class} if 
\[
\|\phi\|_{\text{Lip}}:=\sup_{x_1,x_2\in V, x_1\ne x_2} 
    \frac{|\phi(x_2)-\phi(x_1)|}{|x_2-x_1|}
\] 
is finite. We denote by $\mathcal{G}(\phi)$ the graph of $\phi$; it is a subset of $V\times \R^q$. 

\begin{definition}\label{def:LipschitzSurface}
A compact set $A\subset \R^k$ will be called 
\emph{a $p$-dimensional Lipschitz surface} if 
there are constants $\Li,R>0$ such that 
for any $a\in A$, there exist a Lipschitz function 
$\varphi$, defined on $B_p(0,R)$ 
and taking values in $\R^{q}$, with Lipschitz constant $\|\phi\|_{\text{Lip}}$
less or equal than $\Li$, and a $k\times k$ orthogonal matrix $U_a$ such that 
\[
\CC(0,R,3\Li R)\cap U_aT_{-a}A=\mathcal G(\varphi). 
\]
In other words, near any of its points, $A$ has to be a rotated graph of a Lipschitz function of $p$ variables. 
\end{definition}

One can replace the above parameter $3LR$ with $NLR$ for any fixed constant $N>1$, getting an equivalent definition. 

We will also work with more general sets than Lipschitz surfaces. In what follows, for a set $G\subset \R^k$ and $\si>0$, we define 
\[
[G]_\si:=\{y\in \R^k\sm G: \; 0<\dist(y,G)\le \si\}. \quad 
\]

\begin{definition}\label{def-p-adm}
	Let 
	$\p\in (0,k)$ be a real number and let $G\subset \R^k$ be a compact set. 
	Put $\q=k-\p$. 
	We say that $G$ is \emph{$\p$-admissible} if there is a constant $C$ such that for any $x\in \R^k$ and any $R, \si>0$, 
	\begin{equation}  \label{eq:p-admiss}
		m\big([G]_\si\cap B(x,R)\big)\le C\si^\q R^\p. 
	\end{equation}
\end{definition}

Any $p$-dimensional compact Lipschitz surface is $p$-admissible, because it can be represented as a finite union of (rotated) Lipschitz graphs. 
Notice also that a finite union of $p$-admissible sets is a 
$p$-admissible set. There is a relation of $p$-admissibility with the Assouad dimension, see Proposition~\ref{prop:Assouad-dim} below.  

We will need a few basic facts about 
Hardy fields of functions.   
We refer to \cite[Appendix of Chapter V]{Bourbaki-book-real-variable} for more information. 
Let $\frF$ be the collection of intervals of 
the form $[M, +\infty)$, where $M\in\R$, and 
let $\mathcal{H}(\frF, \R)$ be the ring of  
real-valued functions, defined on sets from $\frF$. 
Two functions $f_1, f_2\in \mathcal{H}(\frF, \R)$ are said to be equivalent ($f_1\sim f_2$) if they coincide on an interval belonging to $\frF$. 
A subset $\frR$ of the quotient set $\mathcal{H}(\frF, \R)/\sim$ by the above equivalence is called {\it a Hardy field} if 
it is a subfield of the ring $\mathcal{H}(\frF, \R)/\sim$ and each class in $\frR$ contains a differentiable function $f$ on an interval in $\frF$ such that the equivalence class of $f'$ 
also belongs to $\frR$ . 

Let $\frR$ be a Hardy field. In what follows, when we say that a function $f$ belongs to $\frR$ we will mean that the equivalence class of $f$ is an element of $\frR$. 
It is known that any function which represents a non-zero element of $\frR$ does not vanish on some interval of the form $[M, +\infty)$, where $M\in\R$. 
It follows that for any two functions $f_1, f_2$ in $\frR$, either $f_1<f_2$ or $f_1\ge f_2$ on some interval $[M, +\infty)$ as above. 

The field of (H) functions 
(see \cite{Bourbaki-book-real-variable}) 
is a particular example of a Hardy field. It is formed by all expressions in $t$, defined on an interval from $\frF$, that can be obtained from rational functions of $t$ by applying finitely many times the composition with $\log$, $\exp$ and arithmetical operations. 

Let us introduce the function 
$\eta\colon(0,\infty)\to\R$, given by 
\[
\eta(t)= \begin{cases}
	t^{2-k} \quad &\text{if } k>2, \\
	\log (1/t) \quad &\text{if } k=2. \\
\end{cases}
\]
Notice that $x\in\R^k\sm\{0\}\mapsto\eta(\abs x)$ is a multiple of the fundamental solution of the Laplace equation. 

Now we are ready to state our first result. Section~\ref{Sec:proof-2.4} will be devoted to its proof. 

\begin{theorem}\label{thm:mainP1}
Let $\Om\ss\R^k$ be a bounded open set. Let  
$A\subset \Om$ be a closed subset of
a Lipschitz curve (that is, a 1-dimensional Lipschitz surface) and $B$ be a closed subset of $\R^k$ such that $B\cap\Om$ is non-empty. 
Let $\psi\colon(\beta,\infty)\to(0,\infty)$ be a concave increasing function, with $\beta>0$, 
such that $\psi(t)\to\infty$ when $t\to\infty$. 
Fix some $\alpha>0$ small enough such that the composition  function $g(t)=(\psi\circ\eta)(t)$ 
is well-defined on $(0,\alpha)$. 
If $u\colon\Omega\sm B\to \R$ is a subharmonic function 
on $\Omega\sm B$ satisfying
\begin{equation}\label{eq:uleg2}
u(x)\le g(\dist(x,A\cup B)), \quad x\in\Omega, 
\; 0<\dist(x,A\cup B)<\alpha
\end{equation}
then there exist positive constants $c_1,c_2$ and  $\tau$, 
which do not depend on the function $u$, such that 
\begin{equation}\label{eq:ule2g-g}
u(x)\le 2g(c_1\dist(x,B))-g(c_2\dist(x,B)),\quad x\in\Omega,\;  0<\dist(x,B)<\tau. 
\end{equation}
\end{theorem}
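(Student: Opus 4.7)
The plan is as follows. Fix $x\in\Om\sm B$, set $r=\dist(x,B)$, and split according to how close $x$ is to $A$. In the easy regime $\dist(x,A)\ge r/4$, we have $\dist(x,A\cup B)\ge r/4$, so \eqref{eq:uleg2} directly yields $u(x)\le g(r/4)$. Choosing constants with $c_1\le 1/4\le c_2$, monotonicity of $g$ gives $g(r/4)\le g(c_1 r)\le 2g(c_1 r)-g(c_2 r)$, closing this case.

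In the main regime $\dist(x,A)<r/4$, apply the sub-mean-value inequality for the subharmonic function $u$ on the ball $B(x,r/2)$, which lies in $\Om\sm B$ provided $\tau$ is chosen small (using that $B\cap\Om\ne\emptyset$ is fixed inside $\Om$, so near a point of $B\cap\Om$ there is ample room). For every $z\in B(x,r/2)$ one has $\dist(z,B)\ge r/2$, so \eqref{eq:uleg2} bounds $u(z)\le g(\min(\dist(z,A),r/2))$. Splitting $B(x,r/2)$ into the complement of $[A]_{r/2}$ (where the bound is $g(r/2)$) and the tube $[A]_{r/2}$ (where it is $g(\dist(z,A))$) produces
\[
u(x)\le g(r/2)+\frac{1}{m(B(x,r/2))}\int_{B(x,r/2)\cap[A]_{r/2}} g(\dist(z,A))\,dm(z).
\]

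Now every compact $1$-dimensional Lipschitz surface is $1$-admissible (Definition~\ref{def-p-adm} with $\p=1$, $\q=k-1$), so $m([A]_\si\cap B(y,R))\le C\si^{k-1}R$. Combining the layer-cake formula with Stieltjes integration by parts (using $s^{k-1}g(s)\to 0$ as $s\to 0^+$, a consequence of the at-most-linear growth of the concave function $\psi$) reduces the tube integral to
\[
\frac{1}{m(B(x,r/2))}\int_{B(x,r/2)\cap[A]_{r/2}} g(\dist(z,A))\,dm(z)\ \le\ \frac{C}{r^{k-1}}\int_0^{r/2} s^{k-2}g(s)\,ds.
\]
The heart of the proof is then to bound the right-hand side by a difference $g(c_1 r)-g(c_2 r)$ for suitable constants. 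Changing variables $t=\eta(s)$ converts it into an integral of $\psi(t)$ against an explicit decaying weight (polynomial in $t$ when $k>2$, exponential in $t$ when $k=2$); invoking concavity of $\psi$ in the sharp form $\psi(t_0+h)\le \psi(t_0)+\psi'(t_0)h$ at $t_0=\eta(c_2 r)$ expresses the result as $g(c_1 r)-g(c_2 r)$ up to constants. Adding the outer $g(r/2)$ term and absorbing it into $g(c_1 r)$ (by choosing $c_1\le 1/2$) yields $u(x)\le 2g(c_1 r)-g(c_2 r)$.

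The main obstacle is producing the \emph{precise} difference form $2g(c_1 r)-g(c_2 r)$ rather than a cruder multiplicative bound $Cg(c_1 r)$. For slowly growing $g$ (such as $g(s)=\log(1/s)$ when $k=2$), any $C>1$ makes $Cg(c_1 r)=C\log(1/r)+O(1)$ strictly exceed $\log(1/r)+O(1)=2g(c_1 r)-g(c_2 r)$ as $r\to 0^+$, so only the concavity-aware estimate produces the required conclusion. The careful, rather than coarse, use of concavity of $\psi$ is what forces the integral bound to assume the required difference form.
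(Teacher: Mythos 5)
There is a genuine gap at the step you yourself call the heart of the proof. Test it with $k=2$ and $\psi(t)=t$, i.e.\ $g(s)=\log(1/s)$: your tube bound is
$\frac{C}{r}\int_0^{r/2}\log(1/s)\,ds=\frac{C}{2}\log(2/r)+\frac{C}{2}$, which tends to $+\infty$ as $r\to0^+$, while $g(c_1r)-g(c_2r)=\log(c_2/c_1)$ is a \emph{constant}; no choice of $c_1,c_2$ and no use of concavity can make the former $\le$ the latter. The same failure occurs for every slowly growing $g$ (in particular for all admissible $g$ when $k=2$, since then $\eta=\log(1/t)$ and concavity of $\psi$ forces at most logarithmic growth): the averaged tube term is comparable to $g(r)$ multiplied by the tube constant $C$ (already $\ge 4/\pi>1$ for a straight line, and growing with the Lipschitz constant $\Li$), whereas $g(c_1r)-g(c_2r)=o(g(r))$. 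Adding the separate $g(r/2)$ term makes things worse: your scheme can only deliver $u(x)\le C'\,g(c\dist(x,B))$ with some $C'>1$ depending on the geometry, and for slowly varying $g$ this multiplicative loss cannot be absorbed by shrinking $c_1$ (since then $g(c_1r)\sim g(r)$). But, as the paper stresses (Remark~\ref{rem:thm-2.4-regular-g} and the remark after Corollary~\ref{cor:cor-our-theorems1}), the whole point of the difference form \eqref{eq:ule2g-g} is that it is strictly stronger than $u\le C g(\dist(\cdot,B))$ precisely in this regime — which includes the $k=2$ case needed for the analytic corollaries. For genuinely power-type $g$ (possible only when $k>2$) your argument could be completed, but that is the easy regime. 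A further warning sign: your argument uses only $1$-admissibility of $A$, so it would apply verbatim to higher-dimensional $A$, for which the sharp conclusion is not claimed.

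The paper's proof is structured so that the coefficients $2$ and $-1$ come out exactly, and it uses the concavity of $\psi$ in a different and essential way: since $\eta(|x|)$ is (a multiple of) the fundamental solution, $-g(|x-y|)=-\psi(\eta(|x-y|))$ is subharmonic, so $v(x)=g(|x-y|/8\Li)+g(|x-z|/8\Li)$ is a superharmonic barrier on a cylinder-like domain $\DD_{a,r}$ around the nearby arc of the curve. Because $p=1$, the curve meets $\partial\DD_{a,r}$ in exactly two points $y,z$, near which the two terms of $v$ blow up; Lemma~\ref{lem:estim-g-Lipsch-curve} gives $g(\dist(\cdot,A))\le v-g(r)$ on the rest of $\partial\DD_{a,r}$, and the maximum principle applied to the subharmonic function $u-v$ yields $u\le v-g(r)$ inside, hence $u(x_0)\le 2g(r/16\Li)-g(r)$ with no averaging loss. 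If you want to salvage your approach, you would have to replace the crude sub-mean-value inequality by such a comparison/barrier argument (or by a Poisson-kernel estimate sharp enough to recover leading coefficient $1$); the layer-cake estimate through the admissibility constant cannot do it.
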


\begin{remark}
	\label{rem:thm-2.4-regular-g} 
	Suppose that $g$  satisfies the hypotheses of the above theorem 
	and that for any $\beta>0$ there exists $\de>0$ such that $g(t^\beta)$ is a convex function on $(0,\de]$. Then the estimate~\eqref{eq:ule2g-g}
	implies that  
	\[
	u(x)\le g(v\dist(x,B)), \qquad x\in\Omega,\, 0<\dist(x,B)<\tau_1, 
	\]
	where $v>0$ and $\tau_1>0$ are constants. The last convexity hypothesis on $g$ is automatically fulfilled if 
	$g(1/t)$ belongs to a Hardy field that contains functions $f_\gamma(t)=t^\gamma$, $\gamma\in\R$. Indeed, in this case $(d^2/(dt)^2)g(t^\beta)$ cannot change sign infinitely many times on $(0,\al]$. 
\end{remark}

By applying quantitative Domar's theorems, 
in Section~\ref{Sec:comparison} we will get the following. 
\begin{theorem}
	\label{thm-Domar-s^-b}
	Let $0<\p<k$ and let $\Om$ be a bounded open subset of $\R^k$. DY 
	Suppose that $B\subset \R^k$ is closed and $A\subset \Om$ is a compact $\p$-admissible set such that $m(A)=0$. 
		Let $g:(0,+\infty)\to(0,+\infty)$ be a strictly decreasing function such that 
		$g(0^+)=+\infty$ 
		and 
		$g(t)\le C_1 t^{-\beta}$ for $t\in (0,1]$, where $\beta>0$ and $C_1>0$ are constants. Suppose also 
	 that $g(1/t)$ belongs to a Hardy field $\frR$, which contains all functions  $t^\gamma$, $\gamma\in\R$.   
	Then for any $a>1$ there exists a constant $v\in(0,1)$ such that the following holds.  
	For any subharmonic function $u$ on 
	$\Om\sm B$ satisfying 
	\eqref{eq:estim-u1}, one has 
	\begin{equation}
		\label{estim-cg}
		u(x)\le a g(v\dist(x,B)), \quad x\in \Om\sm B.  
	\end{equation}
\end{theorem}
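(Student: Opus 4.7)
The plan is to deduce Theorem~\ref{thm-Domar-s^-b} from Theorem~\ref{thm-Domar-A-B} by showing that, under the two additional hypotheses on $g$---the power-type upper bound $g(t)\le C_1 t^{-\be}$ and the Hardy-field regularity of $g(1/t)$---the estimate $u(x)\le h(\dist(x,B))$ supplied by Theorem~\ref{thm-Domar-A-B} collapses to one of the shape $a\,g(v\dist(x,B))$, for any prescribed $a>1$ and a corresponding $v=v(a)\in(0,1)$.

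First I would unpack what Theorem~\ref{thm-Domar-A-B} gives at an arbitrary point $x_0\in\Om\sm B$, setting $r:=\dist(x_0,B)$. On the ball $B_k(x_0,r/2)$ the function $u$ is subharmonic, and the only singularity of the majorant $F(x)=g(\dist(x,A\cup B))$ comes from $A\cap B_k(x_0,r/2)$. The $\p$-admissibility of $A$ controls the distribution of $\dist(\cdot,A)$ in $B_k(x_0,r/2)$ by the bound $m(\{x:\dist(x,A)\le\si\}\cap B_k(x_0,r/2))\le C\si^{\q}r^{\p}$, and the quantitative Domar estimates of Section~\ref{Sec:quant-vers-Domar}, applied with this volume bound, yield $u(x_0)\le s_*(r)$, where $s_*(r)$ is defined by an implicit balance condition relating the super-level sets $\{F>s\}$ to powers of $r$. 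Theorem~\ref{thm-Domar-A-B} packages this in closed form.

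Second I would analyze $s_*(r)$ under our additional assumptions. The bound $g(t)\le C_1 t^{-\be}$ gives $g^{-1}(s)\le (C_1/s)^{1/\be}$ for large $s$, and together with the volume estimate above reduces the Domar balance to a scalar relation whose dominant scaling is $s_*\asymp g(c\,r)$ for a positive constant $c$. Here the Hardy-field hypothesis enters decisively: since $g(1/t)$ lies in a Hardy field containing every $t^\gamma$, the ratio $g(\la r)/g(r)$ has a well-defined monotone limit in $[0,\infty]$ as $r\to 0^+$ for every $\la>0$, so $g$ cannot oscillate and is asymptotically comparable to a single element of a Hardy scale. Consequently, for any preassigned $a>1$ one may choose $v=v(a)\in(0,1)$ small enough that $s_*(r)\le a\,g(v\,r)$ on some interval $(0,r_0]$. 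To extend the inequality to the range $r\ge r_0$, I would use that on $\{x\in\Om\sm B:\dist(x,B)\ge r_0/2\}$ the hypothesis~\eqref{eq:estim-u1} bounds $u$ uniformly by $\sup_{t\ge r_0/2}g(t)<\infty$; this residual constant can be absorbed into $a\,g(v\dist(x,B))$ by further shrinking $v$, since $g(v\,\dist(x,B))\to\infty$ as $v\to 0^+$ for $\dist(x,B)\le\diam(\Om)$.

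The main obstacle is the asymptotic inversion of the implicit Domar balance into the explicit majorant $a\,g(v\,r)$. The power upper bound on $g$ is needed to keep the relevant integrals finite and to pin down the correct scaling, while the Hardy-field hypothesis is precisely what rules out pathological oscillation of $g$ and lets ratios of the form $g(\la r)/g(r)$ behave monotonically in $\la$; together these ensure that the loss accumulated in the Domar estimate can be absorbed entirely into a slight multiplicative factor $a>1$ combined with a contraction of the argument by the factor $v$.
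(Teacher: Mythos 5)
Your overall route is the paper's: reduce to Theorem~\ref{thm-Domar-A-B} and then show that its explicit majorant collapses to $a\,g(v\dist(x,B))$. But the central analytic step is asserted, not proved. Theorem~\ref{thm-Domar-A-B} gives the bound $u(x)\le a^{\rho_{ad}^-(\dist(x,B)/(3D))}$ with $\rho_{ad}$ defined by the integral \eqref{eq-def-rho-ad}, where $\mu_{ad}(\nu)=C_1\big(g^{-1}(a^\nu)\big)^{\q}$; to reach \eqref{estim-cg} one must prove $\rho_{ad}(\nu)\le C_2\,\mu_{ad}(\nu-1)^{1/\q}$ for large $\nu$, since only then does inversion and \eqref{eq-expr-mu-} give $\rho_{ad}^-(t)\le 1+\log_a g(v_0 t)$, i.e.\ the majorant $a\,g(v_0 t)$. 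Your proposal replaces this with the claim that ``the dominant scaling is $s_*\asymp g(c\,r)$,'' which is exactly the point at issue and is genuinely delicate: by Remark~\ref{rem:worse-s^-b} and Example~\ref{ex:fast-growth-g}, the comparability \emph{fails} whenever $g$ grows faster than every power (e.g.\ $g(t)=\exp(t^{-\al})$), so it cannot be read off as the ``correct scaling'' from the volume bound alone. The paper's proof does the work by integrating $\rho_{ad}$ by parts, using the Hardy-field hypothesis (via L'Hospital) to guarantee that $\lim_{t\to0^+}t(\mu_{ad}^-)'(t)$ exists, and using the power bound $g(t)\le C_1t^{-\beta}$ to exclude the value $-\infty$ (otherwise $a^{\mu_{ad}^-(t)}$ would dominate every power of $1/t$, contradicting \eqref{eq-expr-mu-}); this also yields \eqref{q-cond2}. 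Moreover, your stated use of the Hardy field --- that $g(\la r)/g(r)$ has a limit for each $\la$ --- needs $\frR$ to be closed under dilations $f(t)\mapsto f(Ct)$, which the theorem does not assume (the paper adds that hypothesis separately in the remark after Corollary~\ref{cor:cor-our-theorems1}), and even granted, controlling such ratios does not by itself compare $g$ with the integral-defined majorant $a^{\rho_{ad}^-}$.

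There is also a concrete error in your extension to $\dist(x,B)\ge r_0$: you claim that \eqref{eq:estim-u1} bounds $u$ on $\{\dist(x,B)\ge r_0/2\}$ by $\sup_{t\ge r_0/2}g(t)$. This is false, because $\dist(x,A\cup B)=\min(\dist(x,A),\dist(x,B))$ can be arbitrarily small at points close to $A$ but far from $B$, so \eqref{eq:estim-u1} gives no uniform bound there; one still needs a Domar-type estimate, uniform over all admissible $u$, to control $u$ near $A$ in that region. The conclusion of that step is salvageable (as in the paper, by shrinking $v$ after securing a uniform bound away from $B$), but not by the argument you give.
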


The Hardy field of (H) functions, mentioned above, meets the above condition. 

Theorem~\ref{thm-Domar-s^-b} 
only applies to functions $g$ whose singularity at $0^+$ 
is at most of power-type. By using quantitative Domar's theorems, below we give Theorem~\ref{thm-Domar-A-B}, 
which applies to functions $g$ with much stronger singularity at $0^+$.

\begin{corollary}\label{cor:cor-our-theorems1}
Let $A,B,\Om$ be as in Theorem~\ref{thm-Domar-s^-b}. 

\begin{enumerate}
\item Let $g(t)=t^{-b}$, $0<b<q$. 
Then, 
by Theorem~\ref{thm-Domar-s^-b}, \eqref{eq:uleg2} implies 
that $u(x)\le Cg(\dist(x,B))$, $x\in \Om\setminus B$.  
If $A$ is a 
closed subset of a 
Lipschitz curve, 
$k\ge 3$ and 
$b\le k-2$, one can apply Theorem~\ref{thm:mainP1} to get the same conclusion. 

\item Let now $g(t)=\log^b(1/t)$ for $0<t\le 1/2$, where $0<b<\infty$. By Theorem~\ref{thm-Domar-s^-b}, 
\eqref{eq:uleg2} implies that for any $a>1$ there exist constants $C>0$ and $\tau_1>0$ such that  
\[
u(x)\le a\log^b(C/\dist(x,B)), \qquad x\in\Omega,\, 0<\dist(x,B)<\tau_1, 
\]

\item Still consider the case $g(t)=\log^b(1/t)$, where 
$0<b<\infty$ and $A$ is 
closed subset of a 
Lipschitz curve.

If $k>2$,  we can apply Theorem~\ref{thm:mainP1} to get a better estimate  
\[
u(x)\le \log^b(C/\dist(x,B)), \qquad x\in\Omega,\, 0<\dist(x,B)<\tau_1, 
\]
where $C>0$ and $\tau_1>0$ are constants. 
If $k=2$, we get this inequality if we assume additionally that $0<b\le 1$. 
\end{enumerate}
\end{corollary}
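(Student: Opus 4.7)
The plan is to treat each item by directly verifying the hypotheses of the relevant master theorem (either Theorem~\ref{thm-Domar-s^-b} or Theorem~\ref{thm:mainP1} together with Remark~\ref{rem:thm-2.4-regular-g}) for the specific $g$ at hand, and then reading off the conclusion. In every case the Hardy-field condition is easy because $g(1/t)$ is an (H) function, so the real work is (a) checking the power-type upper bound required by Theorem~\ref{thm-Domar-s^-b} and (b) identifying the profile $\psi$ in the decomposition $g=\psi\circ\eta$ demanded by Theorem~\ref{thm:mainP1} and verifying its concavity.

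For part (1) with $g(t)=t^{-b}$, the Domar approach is immediate: $g(1/t)=t^{b}$ lies in the (H) Hardy field and $g(t)\le t^{-b}$ on $(0,1]$, so Theorem~\ref{thm-Domar-s^-b} (applied with any fixed $a>1$) gives $u(x)\le a(v\dist(x,B))^{-b}=Cg(\dist(x,B))$. For the Lipschitz-curve variant with $k\ge 3$ and $b\le k-2$ I set $\psi(s)=s^{b/(k-2)}$, so that $(\psi\circ\eta)(t)=t^{-b}=g(t)$; the exponent $b/(k-2)$ lies in $(0,1]$, so $\psi$ is increasing and concave and Theorem~\ref{thm:mainP1} applies. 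The convexity hypothesis in Remark~\ref{rem:thm-2.4-regular-g} is automatic because $g(1/t)$ belongs to the (H) Hardy field, so the conclusion is $u(x)\le g(v'\dist(x,B))=(v')^{-b}\dist(x,B)^{-b}$, which again has the form $Cg(\dist(x,B))$.

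For parts (2) and (3) with $g(t)=\log^b(1/t)$, $g(1/t)=(\log t)^b$ is likewise an (H) function and satisfies $\log^b(1/t)\le C_\beta t^{-\beta}$ on $(0,1]$ for every $\beta>0$, so Theorem~\ref{thm-Domar-s^-b} delivers $u(x)\le a\log^b(C/\dist(x,B))$ with $C=1/v$, which is exactly part~(2). For part~(3) with $A$ a Lipschitz curve and $k>2$, take $\psi(s)=(\log s)^b/(k-2)^b$ so that $(\psi\circ\eta)(t)=\log^b(1/t)=g(t)$; differentiating twice gives
\begin{equation*}
\psi''(s)=\frac{b\,(\log s)^{b-2}}{(k-2)^b\,s^{2}}\bigl((b-1)-\log s\bigr),
\end{equation*}
which is negative for all sufficiently large $s$, so $\psi$ is increasing and concave on some half-line $(\beta,\infty)$. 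For $k=2$ the relation $g=\psi\circ\eta$ forces $\psi(s)=s^b$, which is concave precisely when $0<b\le 1$; this is the source of the additional restriction. In every applicable sub-case Theorem~\ref{thm:mainP1} is available, and Remark~\ref{rem:thm-2.4-regular-g} upgrades its conclusion to $u(x)\le g(v'\dist(x,B))=\log^b(C/\dist(x,B))$ on a neighbourhood of $B$, with no multiplicative constant in front.

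The main obstacle is the concavity verification for $\psi$ in the logarithmic case: one must identify the correct $\psi$, then check concavity on a suitable right half-line, and this is precisely where the dimensional thresholds ($b\le k-2$ for $k\ge 3$ in part~(1), and the dichotomy $k>2$ versus $k=2$ with $b\le 1$ in part~(3)) are forced on the argument. Everything else is routine bookkeeping of constants and the observation that each of the $g$'s considered lies in the field of (H) functions, which is the only nontrivial input needed to invoke Remark~\ref{rem:thm-2.4-regular-g} and remove the factor $a$.
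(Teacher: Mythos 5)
Your proposal is correct and follows exactly the route the paper intends for this corollary (which it states without a separate written proof): parts obtained from Theorem~\ref{thm-Domar-s^-b} by checking the Hardy-field and power-growth hypotheses for $g(t)=t^{-b}$ and $g(t)=\log^b(1/t)$, and parts obtained from Theorem~\ref{thm:mainP1} with Remark~\ref{rem:thm-2.4-regular-g} by writing $g=\psi\circ\eta$ with $\psi(s)=s^{b/(k-2)}$, $\psi(s)=(\log s)^b/(k-2)^b$, or $\psi(s)=s^b$ and verifying concavity, which is precisely where the thresholds $b\le k-2$ and $0<b\le 1$ (for $k=2$) arise. The computations (the second derivative of $\psi$, the identification $C=1/v$, and the use of the (H) field to invoke the Remark) are all accurate.
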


\begin{remark}
	Suppose that $g(1/t)$ belongs to a Hardy field, which is closed under the mappings $f(t)\mapsto f(Ct)$, $C>0$. If for some $b>0$, $g(t)t^b\to 0$ as $t\to 0^+$, then 
	for any $C\in (0,1)$ there are constants $C_1>0$, $t_0>0$ such that 
	\[
	g(Ct)\le C_1 g(t), \quad 0<t\le t_0. 
	\]
	Indeed, if it were not the case, then by applying the properties of Hardy fields, one would have $g(Ct)> C^{-b} g(t)$ for $0<t\le t_1$, where $t_1>0$. This implies 
	$g(C^nt_1)> C^{-bn} g(t_1)$, $n\ge 1$, which contadicts the assumption $g(t)t^b\to 0$ as $t\to 0^+$. 
	
	Now suppose that $g$ has a 
	weak singularity at $0^+$, that is, for any $\eps>0$, $g(t)t^\eps\to 0$ as $t\to 0^+$. 	Due to a similar argument, then for any numbers $C>1$, $t_0>0$ there is no $v\in (0,1)$ such that $C g(t)\le g(vt)$, $0<t\le t_0$. So for these functions $g$, the assertions of Remark~\ref{rem:thm-2.4-regular-g} and Theorem~\ref{thm-Domar-s^-b} are strictly 
	stronger than an estimate $u(x)\le C g(\dist(x,B))$, $x\in \Om\setminus B$, where $C$ is a constant. 
\end{remark}

By applying Corollary~\ref{cor:cor-our-theorems1} to the case when $k=2$ and 
$u=\log |f|$, where $f$ is analytic 
(so that $u$ is subharmonic), we get the following statement. 

\begin{corollary}\label{cor-analytic-f}
Let $\Om$ be a bounded domain in the complex plane $\C$, $B$ a closed subset of $\C$ 
and $A\subset \Om$ a compact subset of a Lipschitz curve. Let $f$ be an analytic function on $\Om\sm B$.  

\begin{enumerate}
	\item If, for some $b>0$ and $C>0$, 
	\[
	|f(z)|\le C\dist(z,A\cup B)^{-b}, \quad z\in \Om\sm (A\cup B), 
	\]
	then there is a constant $C_1$ such that 
	$|f(z)|\le C_1\dist(z,B)^{-b}$ for all $z\in \Om\sm B$. 

	\item If, for some $a,b, \alpha>0$,  
\[
|f(z)|\le 
a\bigg(\log\frac 1{\dist(z,A\cup B)}\bigg)^{b}, \quad z\in \Om\sm (A\cup B), \dist(z,A\cup B)<\alpha, 
\]
then there are constants $\eps, \tau$ 
with $\eps> \tau>0$, such that 
\[
|f(z)|\le 
a\Big(\log\frac \eps{\dist(z,B)}\Big)^{b}, \quad z\in \Om\sm B, \dist(z,B)<\tau.  
\]
\end{enumerate}
\end{corollary}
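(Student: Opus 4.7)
The strategy is to set $u := \log|f|$, which is subharmonic on $\Om\sm B$ whenever $f$ is analytic there, and to apply the preceding subharmonic estimates to $u$. In dimension $k=2$ the function $\eta$ of Section~\ref{Sec:our-main-results} is $\eta(t)=\log(1/t)$, so the expressions $b\log(1/t)$ and $\log\log(1/t)$ that arise from the two hypotheses are naturally of the form $\psi\circ\eta$.

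For part (1), taking logarithms yields $u(z)\le g_1(\dist(z,A\cup B))$ with $g_1(t):=\log C+b\log(1/t)=\psi_1(\eta(t))$, where $\psi_1(s)=\log C+bs$ is affine and therefore concave, increasing, and tending to $\infty$; it is positive on $(\beta_1,\infty)$ for $\beta_1:=\max(0,-\log C/b)$. Hence Theorem~\ref{thm:mainP1} applies. Moreover $g_1(1/t)=\log C+b\log t$ lies in the Hardy field of (H)-functions (built from $\log$ and arithmetic operations), which contains every $t^\gamma$, so Remark~\ref{rem:thm-2.4-regular-g} gives constants $v,\tau_1>0$ with
\[
u(x)\le g_1(v\dist(x,B)),\qquad 0<\dist(x,B)<\tau_1.
\]
Exponentiating produces $|f(x)|\le Cv^{-b}\dist(x,B)^{-b}$ in this range. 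To extend the bound to all of $\Om\sm B$ one argues that $|f|$ is uniformly bounded on $K:=\{x\in\Om\sm B:\dist(x,B)\ge\tau_1\}$: at points of $K$ with $\dist(x,A\cup B)\ge\tau_1/2$ the original hypothesis directly gives $|f(x)|\le C(\tau_1/2)^{-b}$, while on the remaining strip around $A\setminus B$ one combines the analyticity of $f$ on $\Om\sm B$ with a compactness/maximum-principle argument (using that $A\subset\Om$ is compact) to obtain a uniform bound. Absorbing everything into the final constant yields $|f(z)|\le C_1\dist(z,B)^{-b}$ on $\Om\sm B$.

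For part (2), logging the hypothesis gives $u(z)\le g_2(\dist(z,A\cup B))$ with $g_2(t):=\log a+b\log\log(1/t)=\psi_2(\eta(t))$ and $\psi_2(s)=\log a+b\log s$ concave, increasing, tending to $\infty$, and positive on a suitable half-line. Again $g_2(1/t)=\log a+b\log\log t$ lies in the Hardy field of (H)-functions, so Remark~\ref{rem:thm-2.4-regular-g} supplies $v\in(0,1)$ and $\tau>0$ with $u(x)\le g_2(v\dist(x,B))$ on $0<\dist(x,B)<\tau$. Setting $\epsilon:=1/v>1$, a direct computation gives
\[
g_2(vt)=\log a+b\log\log(\epsilon/t),
\]
and exponentiating yields exactly $|f(x)|\le a\bigl(\log(\epsilon/\dist(x,B))\bigr)^b$; shrinking $\tau$ if needed we arrange $\tau<\epsilon$. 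The only real work in this proof is identifying the correct $\psi_1,\psi_2$ and checking the Hardy-field membership; the extension step in part (1) is the most technical point, but is purely local/compactness-based once the near-$B$ estimate is in hand.
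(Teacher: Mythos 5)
Your proposal is correct and follows essentially the same route as the paper: set $u=\log|f|$ and apply Theorem~\ref{thm:mainP1} together with Remark~\ref{rem:thm-2.4-regular-g} in dimension $k=2$ with $\psi_1(s)=\log C+bs$, resp.\ $\psi_2(s)=\log a+b\log s$, which is exactly how the paper deduces the corollary (via Corollary~\ref{cor:cor-our-theorems1}). Your extra step extending part (1) to all of $\Om\sm B$ is fine in substance (one only needs that $f$ is analytic, hence locally bounded, across $A\sm B$, and that $\dist(\cdot,B)$ is bounded on $\Om$), just take the width of the strip around $A$ smaller than $\dist(A,\partial\Om)$ so the relevant set is a compact subset of $\Om\sm B$.
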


By applying Theorem~\ref{thm-Domar-s^-b} to $\log|f|$, we obtain a slightly weaker estimate than that in (2) under a weaker assumption on the size of the set $A$. 

\begin{corollary}\label{cor-analytic-f2}
	Let $\Om$ be a bounded domain in the complex plane $\C$, 
	$0<\p<2$, $B$ be a closed subset of $\C$ and $A\subset \Om$ a compact $\p$-admissible set such that $m(A)=0$. Let $f$ be an analytic function on $\Om\sm B$.  
		
      If, for some $a,b,\alpha>0$,  
		\[
		|f(z)|\le 
		a\bigg(\log\frac 1{\dist(z,A\cup B)}\bigg)^{b}, \quad z\in \Om\sm (A\cup B), \dist(z,A\cup B)<\alpha, 
		\]
		then for any $a_1>a$ there are constants $\eps, \tau$ 
		with $\eps>\tau>0$ such that 
		\[
		|f(z)|\le 
		a_1\Big(\log\frac \eps{\dist(z,B)}\Big)^{b}, \quad z\in \Om\sm B, \dist(z,B)<\tau.  
		\]
\end{corollary}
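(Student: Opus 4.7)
The plan is to apply Theorem~\ref{thm-Domar-s^-b} not directly to $u = \log|f|$, but to the subharmonic function $w := |f|^{1/b}$; recall that $|f|^p$ is subharmonic on the set of holomorphy of $f$ for every $p > 0$. The reason is that a direct application to $u = \log|f|$ with a choice such as $g(t) = \log a + b\log\log(1/t)$ would yield $u(x) \le \lambda g(c\dist(x,B))$ with $\lambda > 1$, and the factor $\lambda$ would multiply the iterated-logarithm term, producing a bound on $|f|$ with the corrupted exponent $\lambda b > b$ rather than $b$. Passing to $w$ replaces the outer logarithm by a single logarithm, so that the prefactor $\lambda$ is absorbed into an allowed change of the multiplicative constant in front of $(\log(\eps/\dist(\cdot,B)))^b$ --- exactly the flexibility afforded by the hypothesis $a_1 > a$.

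Translating the assumption, $w(z) \le a^{1/b}\log(1/\dist(z, A\cup B))$ whenever $\dist(z, A\cup B) < \alpha$. I would define $g \colon (0,\infty) \to (0,\infty)$ by $g(t) = a^{1/b}\log(1/t)$ on $(0, 1/e]$ and, say, $g(t) = a^{1/b}/(et)$ on $[1/e, \infty)$, yielding a continuous, strictly decreasing function with $g(0^+) = \infty$. The Hardy-field and power-bound conditions of Theorem~\ref{thm-Domar-s^-b} depend only on the germ of $g$ at $0^+$, where $g(1/t) = a^{1/b}\log t$ is an (H)-function (hence lies in a Hardy field containing $t^\gamma$ for all $\gamma \in \R$) and $g$ is dominated by any $t^{-\beta}$; so the extension of $g$ away from $0$ is immaterial. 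To ensure $w \le g(\dist(\cdot, A\cup B))$ holds throughout the whole open set, I would shrink $\Om$ to $\Om_0 := \{z \in \Om : \dist(z, A\cup B) < \min(\alpha, 1/e)\}$, which is a bounded open subset of $\C$ still containing the compact $\p$-admissible set $A$ with $m(A) = 0$.

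Given $a_1 > a$, I would choose $\lambda \in (1, (a_1/a)^{1/b})$, possible because $a_1 > a$. Theorem~\ref{thm-Domar-s^-b}, applied on $\Om_0$ to $w$ and $A$, then produces $c \in (0, 1)$ such that $w(x) \le \lambda g(c\dist(x, B))$ for all $x \in \Om_0 \sm B$. Setting $\eps := 1/c$ and $\tau := \min(\alpha, 1/e, 1/(ec))$, any $x \in \Om \sm B$ with $\dist(x, B) < \tau$ lies in $\Om_0$ and satisfies $c\dist(x, B) < 1/e$, so the logarithmic branch of $g$ applies and yields $w(x) \le \lambda a^{1/b}\log(\eps/\dist(x, B))$. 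Raising to the $b$-th power gives $|f(x)| \le \lambda^b a (\log(\eps/\dist(x, B)))^b < a_1(\log(\eps/\dist(x, B)))^b$, and $\eps = 1/c > 1 > \tau$ verifies the required $\eps > \tau$.

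The main obstacle is the initial observation that one must apply Theorem~\ref{thm-Domar-s^-b} to $|f|^{1/b}$ rather than to $\log|f|$: otherwise the factor $\lambda > 1$ intrinsic to the conclusion of Theorem~\ref{thm-Domar-s^-b} corrupts the exponent on the logarithm. After this insight, the remainder is careful bookkeeping of constants and the standard remark that the hypotheses imposed on $g$ in Theorem~\ref{thm-Domar-s^-b} are germ conditions at $0^+$, so the behavior of $g$ on $[1/e, \infty)$ may be prescribed freely.
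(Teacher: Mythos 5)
Your proof is correct, but it takes a genuinely different route from the one the paper indicates --- and in fact it quietly repairs it. The paper obtains this corollary from the single remark that one should apply Theorem~\ref{thm-Domar-s^-b} to $\log|f|$. Taken literally, the hypothesis forces the majorant $g(t)=\log a+b\log\log(1/t)$, and the multiplicative factor $\lambda>1$ intrinsic to the conclusion $u\le\lambda\, g(v\dist(\cdot,B))$ then multiplies the $\log\log$ term; exponentiating gives only $|f|\le a^{\lambda}\bigl(\log(\eps/\dist(\cdot,B))\bigr)^{\lambda b}$, and since $(\lambda-1)\log\log(1/d)\to\infty$ as $d\to0^+$, no constant $a_1$ recovers the exponent $b$. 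Your device of applying the theorem instead to the subharmonic function $|f|^{1/b}$ with $g(t)=a^{1/b}\log(1/t)$ makes the factor $\lambda$ hit only the constant in front, which is exactly the flexibility that $a_1>a$ affords; an equivalent and slightly simpler variant is to apply it to $|f|$ itself with $g(t)=a\log^b(1/t)$ (here $g(1/t)=a(\log t)^b$ is again an (H)-function), which yields directly $|f|\le\lambda a\bigl(\log(\eps/\dist(\cdot,B))\bigr)^{b}$ with $\lambda=a_1/a$. The rest of your bookkeeping is accurate: the Hardy-field and power-growth hypotheses on $g$ are indeed germ conditions at $0^+$, so the extension of $g$ beyond $1/e$ is immaterial; shrinking to $\Om_0=\{z\in\Om:\dist(z,A\cup B)<\min(\alpha,1/e)\}$ correctly converts the local hypothesis ($\dist<\alpha$) into the global hypothesis \eqref{eq:estim-u1} required by Theorem~\ref{thm-Domar-s^-b} (a point the paper also glosses over), with $A$ still a compact $\p$-admissible subset of $\Om_0$ of measure zero; and the choices $\lambda\in(1,(a_1/a)^{1/b})$, $\eps=1/c$, $\tau=\min(\alpha,1/e,1/(ec))$ give the stated estimate with $\eps>\tau$.
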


These estimates for analytic functions were, in fact, the main motivation of our research. In a forthcoming article \cite{BelloYak-resolvents}, we apply analogues of  Corollaries~\ref{cor-analytic-f} 
and~\ref{cor-analytic-f2} 
to questions related with the estimates of the resolvent growth of a function of a linear operator. 

\begin{corollary}\label{cor:union-Lip-surf}
The statements of Remark~\ref{rem:thm-2.4-regular-g}
and Corollary~\ref{cor-analytic-f} generalize to the case when $A$ is a compact subset of $\Om$ which is contained in a finite unions of Lipschitz curves.  
\end{corollary}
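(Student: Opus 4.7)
\textbf{Proof proposal for Corollary~\ref{cor:union-Lip-surf}.} Write $A\subset L_1\cup\cdots\cup L_n$, where each $L_j$ is a Lipschitz curve, and set $A_j:=A\cap L_j$. Each $A_j$ is a compact subset of $\Om$ contained in the Lipschitz curve $L_j$, so Theorem~\ref{thm:mainP1} applies to it directly. The plan is to peel the pieces $A_j$ off one at a time by iterating Theorem~\ref{thm:mainP1} (combined with Remark~\ref{rem:thm-2.4-regular-g}, or with Corollary~\ref{cor-analytic-f} in the analytic case), each time transferring one $A_j$ into the set playing the role of $B$.

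Define $B_j:=B\cup A_{j+1}\cup\cdots\cup A_n$, so that $B_0=A\cup B$ and $B_n=B$. Each $B_j$ is closed, meets $\Om$, and contains $B$, so $u$ remains subharmonic on $\Om\sm B_j$. I claim inductively that there exist constants $v_j>0$ and $\tau_j>0$ such that
\[
u(x)\le g(v_1v_2\cdots v_j\,\dist(x,B_j)),\qquad x\in \Om\sm B_j,\ 0<\dist(x,B_j)<\tau_j.
\]
The base case $j=0$ is the hypothesis on $u$. For the inductive step, observe that $A_j\cup B_j=B_{j-1}$, so the induction hypothesis gives the bound $\tilde g(\dist(x,A_j\cup B_j))$ with $\tilde g(t):=g(v_1\cdots v_{j-1}t)$. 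Theorem~\ref{thm:mainP1} applied with $A$ replaced by $A_j$, $B$ by $B_j$, and $g$ by $\tilde g$ yields an estimate of the form~\eqref{eq:ule2g-g}; Remark~\ref{rem:thm-2.4-regular-g} then upgrades this to a bound $\tilde g(v_j\dist(x,B_j))=g(v_1\cdots v_j\,\dist(x,B_j))$. Taking $j=n$ gives the desired conclusion. The analytic statement (Corollary~\ref{cor-analytic-f}) follows by applying the subharmonic version to $u=\log|f|$, exactly as in the original deduction.

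The one delicate point is to verify that the rescaled function $\tilde g(t)=g(ct)$ still satisfies the structural hypotheses of Theorem~\ref{thm:mainP1} and Remark~\ref{rem:thm-2.4-regular-g} at each step. If $g=\psi\circ\eta$ with $\psi$ concave and increasing, then for $k\ge 3$ we have $\eta(ct)=c^{2-k}\eta(t)$, so $\tilde g=\tilde\psi\circ\eta$ with $\tilde\psi(s)=\psi(c^{2-k}s)$, which is again concave and increasing; for $k=2$ we have $\eta(ct)=\eta(t)+\log(1/c)$, whence $\tilde\psi(s)=\psi(s+\log(1/c))$, again concave and increasing on a shifted interval. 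Likewise, the Hardy-field hypothesis (and the convexity of $g(t^\beta)$ on a small interval used in Remark~\ref{rem:thm-2.4-regular-g}) is stable under $t\mapsto ct$, since $t\mapsto ct$ belongs to any Hardy field containing the power functions. The main obstacle is thus not a new argument but purely this bookkeeping: once the class of admissible right-hand sides is seen to be closed under the rescaling produced at each step, the corollary follows by a straightforward finite induction on $n$.
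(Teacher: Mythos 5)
Your proof is correct and is essentially the paper's own argument: the paper justifies the corollary with the single remark that one applies induction on $n$, and your peeling scheme with $B_j=B\cup A_{j+1}\cup\cdots\cup A_n$, applying Theorem~\ref{thm:mainP1} and Remark~\ref{rem:thm-2.4-regular-g} to one Lipschitz piece at a time, is exactly that induction carried out in detail. One small touch-up: the stability of the Remark~\ref{rem:thm-2.4-regular-g} hypothesis under $t\mapsto ct$ is better justified directly (since $\tilde g(t^\beta)=g\big((c^{1/\beta}t)^\beta\big)$ is a linear rescaling of a convex function, hence convex near $0$) than by appealing to Hardy-field closure under rescaling, which is not automatic; this does not affect the validity of your argument.
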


Indeed, one can apply induction on $n$ to prove that this statement holds if $A$ is contained in the union 
of $n$ Lipschitz surfaces.

\begin{remark}
Theorem  \ref{thm-Domar-s^-b}  
does not hold if $A$ is a general closed set. Indeed, take $B=\{x_0\}$, where $x_0\in \Om$. Fix a function $g$ 
and then a function $u$, subharmonic in $\Om\sm\{x_0\}$, whose growth at $x_0$ is such that $u$ does not satisfy the conclusion of this theorem. Then one can find a closed countable set $A\subset \Om$, whose unique accumulation point is $x_0$, such that $\dist(x,A\cup B)$ decays so rapidly when $x\to x_0$ that \eqref{eq:estim-u1} is satisfied. The same example shows that 
Theorem~\ref{thm:mainP1} and Remark~\ref{rem:thm-2.4-regular-g} also fail for a general closed set $A$. 
\end{remark}

\section{Proof of Theorem~\ref{thm:mainP1}}
\label{Sec:proof-2.4}
\noindent 
This section is devoted to the proof of Theorem~\ref{thm:mainP1}. Fix $\Omega$, $A$, $B$, $g$ and $u$ as in the formulation of that theorem. Recall that now we consider $p=1$.

Notice that the function $g$ is decreasing and $g(t)\to\infty$ as $t\to0^+$. Moreover,  since the function $x\in\R^k\sm\{0\}\mapsto\eta(\abs x)$ is a multiple of the fundamental solution of Laplace equation, the function $-g(\abs x)$ is subharmonic 
(see \cite[Theorem~2.2]{Kenn-Hayman}). 
Using that $g$ is decreasing, we can assume without loss of generality that 
$A$ is a Lipschitz curve.

In Definition~\ref{def:LipschitzSurface}, 
we can take $\Li$ as large as needed and 
$R>0$ as small as wanted. In particular, let us choose $\Li$, $R$ so that  $\Li\ge 2$ and 
$R<2\alpha$. 

Fix a domain $\DD$ in $\R^k$ with $C^2$ smooth boundary 
such that 
\[
\CC(0,1, 2\Li)\subset \DD
\subset \CC(0,1, 3\Li)
\]
Given any $a\in A$ and $r\in(0,R)$, we set 
\[
\DD_{a,r}=T_aU_a^{-1}(r\DD). 
\]
To simplify notation, we will assume that $U_a=I$. 
Then, by Definition~\ref{def:LipschitzSurface}, $\CC(a,R,3LR)\cap A$ is a graph of a function $\phi$ with $\|\phi\|_{\text{Lip}}\le L$, which is defined on the interval $(a'-R,a'+R)$ (here $a'$ is fhe first coordinate of $a$, $a=(a',a'')$).   

\begin{lemma}\label{lem:old.Claim.1}
	Let $a\in A$ and $r\in(0,R/8L)$. Assume that 
	$U_a=I$. Then for all $x\in\partial\DD_{a,r}$ we have 
	\[
	\frac{\abs{x''-\varphi(x')}}{L+1}\le\dist(x,A)\le\abs{x''-\varphi(x')}. 
	\]
\end{lemma}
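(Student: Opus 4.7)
The plan is to prove the two inequalities separately, exploiting the explicit graph description of $A$ near $a$. Recall that since $U_a = I$, the definition gives $\CC(a,R,3LR)\cap A$ as the graph of a Lipschitz function $\phi$ on $(a'-R, a'+R)$, and $\DD_{a,r}\subset \CC(a,r,3Lr)$, so any $x\in \partial\DD_{a,r}$ satisfies $|x'-a'|\le r$ and $|x''-a''|\le 3Lr$. Because $r < R/(8L) < R$, the point $(x',\phi(x'))$ lies in $A$, which immediately gives the upper bound:
\[
\dist(x,A)\le \bigl|x-(x',\phi(x'))\bigr|= |x''-\phi(x')|.
\]

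For the lower bound, I split $A=A_1\cup A_2$ with $A_1:=A\cap \CC(a,R,3LR)$ (the graph piece) and $A_2:=A\setminus A_1$. For $y=(y',\phi(y'))\in A_1$, a triangle-inequality computation exploiting the Lipschitz condition yields
\[
|x''-\phi(x')|\le |x''-\phi(y')|+|\phi(y')-\phi(x')|\le |x-y|+L|x'-y'|\le (L+1)|x-y|,
\]
since $|x''-\phi(y')|=|x''-y''|\le |x-y|$ and $|x'-y'|\le |x-y|$. Taking the infimum over $y\in A_1$ gives $\dist(x,A_1)\ge |x''-\phi(x')|/(L+1)$.

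The main obstacle is ruling out contributions from $A_2$: points of $A$ outside the local graph patch that might \emph{a priori} be closer to $x$ than the target bound. To this end I first estimate the right-hand side from above using $a''=\phi(a')$:
\[
|x''-\phi(x')|\le |x''-a''|+|\phi(a')-\phi(x')|\le 3Lr+L\cdot r=4Lr,
\]
so the target lower bound $|x''-\phi(x')|/(L+1)$ is in fact bounded by $4Lr/(L+1)\le 4r$. On the other hand, for $y\in A_2$ we have either $|y'-a'|\ge R$ or $|y''-a''|\ge 3LR$. In the first case $|x-y|\ge |x'-y'|\ge R-r$, and in the second $|x-y|\ge |x''-y''|\ge 3L(R-r)$. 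The quantitative hypothesis $r<R/(8L)$ together with $L\ge 2$ makes both of these comfortably larger than $4r$, so $\dist(x,A_2)\ge 4r\ge |x''-\phi(x')|/(L+1)$.

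Combining the $A_1$ and $A_2$ estimates yields $\dist(x,A)\ge |x''-\phi(x')|/(L+1)$, completing the proof. The only delicate point is the arithmetic verification that the smallness assumption on $r$ forces $A_2$ to lie well outside the ball of radius $4r$ around $x$; everything else reduces to the Lipschitz property of $\phi$.
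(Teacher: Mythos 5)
Your proof is correct. Its core -- the upper bound via the graph point $(x',\phi(x'))$ and the chain $|x''-\phi(x')|\le|x''-\phi(y')|+|\phi(y')-\phi(x')|\le(L+1)|x-y|$ for points $y$ on the graph -- is the same computation the paper performs (the paper phrases it through the auxiliary point $z=(x',\phi(x'))$). Where you genuinely differ is in how points of $A$ outside the local patch are ruled out: the paper picks $y\in A$ realizing $\dist(x,A)$ and observes $|y-a|\le|y-x|+|x-a|\le 2|x-a|<8Lr<R$, so the minimizer automatically lies on the graph; you instead decompose $A=A_1\cup A_2$, bound the target quantity by $|x''-\phi(x')|\le 4Lr$, and verify that every point of $A_2$ is at distance greater than $4r\ge 4Lr/(L+1)$ from $x$, using $r<R/(8L)$ and $L\ge 2$. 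The two localizations cost about the same: the paper's nearest-point trick is a line shorter, while your splitting avoids invoking attainment of the distance (immaterial here, since $A$ is compact, but slightly more robust) and makes the quantitative role of the hypothesis $r<R/(8L)$ more explicit. Your arithmetic checks out, including the dependence on the normalization $L\ge 2$ fixed at the start of the section.
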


\begin{proof}
	We have  
	\[
	\partial\DD_{a,r}\cap A
	=
	\partial \, \CC(a,r,2\Li r)\cap A
	=
	\big\{
	\big(a'-R,\phi(a'-R)\big), \big(a'+R, \phi(a'+R)\big)
	\big\}. 
	\]
    Fix $x\in\partial\DD_{a,r}$. 
	Then $\abs{x-a}<4\Li r$. In particular, 
	$x'\in (a'-R, a'+R)$. 
	Since $B_k(a,R)\ss\CC(a,R,3\Li R)$, the points of $A\cap B_k(a,R)$ belong to the graph of $\varphi$. 
	
	The second inequality of the statement is immediate: 
	\[
	\dist(x,A)\le\abs{x-(x',\varphi(x'))}=\abs{x''-\varphi(x')}.
	\]
	For the first inequality, notice that for some $y\in A$ 
	we have $\dist(x,A)=\abs{x-y}$. Hence 
	\[
%
%
	\abs{y-a}\le \abs{y-x}+\abs{x-a}
	\le 2\abs{x-a}<R, 
	\]
	and therefore $y=(y',\varphi(y'))$.  
    Now let $z:=(x',\varphi(x'))$. 
    Since $|x'-a'|<4Lr<R$, 
    it follows that $z\in A$. 
    Since 
    $\abs{y'-z'}=\abs{y'-x'}\le \abs{y-x}$ 
    and $\abs{z''-x''}=\abs{z-x}$, 
    we have 
    \[
    L\abs{y-x}\ge L\abs{y'-z'}\ge 
    \abs{y''-z''}
    \ge \abs{z''-x''}-\abs{y''-x''}
    \ge \abs{z-x}-\abs{y-x}. 
    \]
    This gives that 
    \[
    \abs{z-x}\le (L+1)\abs{y-x}, 
    \]
    so that 
	\[
    \frac{\abs{x''-\varphi(x')}}{L+1}
    =
    \frac{\abs{z-x}} {L+1}\le \abs{y-x}=    \dist(x,A), 
    \] 
as we wanted to prove. 
\end{proof}

Now suppose that $u$ is a subharmonic function on $\Om\sm B$, which satisfies~\eqref{eq:uleg2}. We wish to show~\eqref{eq:ule2g-g}.  
In this estimate, one can always replace $c_1$ by a smaller positive constant 
and $c_2$ by a larger constant (changing also the value of $\tau$). 
So, if estimates of the form~\eqref{eq:ule2g-g} hold on two different subsets of $\Om$ 
(for different choices of constants $c_1, c_2, \tau$), then an estimate of the same form holds also 
on the union of these two subsets.  

If $\dist(x,B)\le20\Li\dist(x,A)$, then $\dist(x,A\cup B)\ge\dist(x,B)/(20\Li)$; 
so in this case \eqref{eq:uleg2} clearly implies \eqref{eq:ule2g-g}. 
Set $\tau=R/2$. 
Then it remains to prove \eqref{eq:ule2g-g} for points $x$ satisfying 
\begin{equation}\label{eq:key.region.2}
	20\Li\dist(x,A)<\dist(x,B)<R/2.
\end{equation}
Fix a point $x_0$ as in \eqref{eq:key.region.2}, and set 
\begin{equation}\label{eq:def.r.2}
	r:=\frac{\dist(x_0,B)}{10\Li}<\frac R {20\Li}. 
\end{equation}
For some $a\in A$ we have $\dist(x_0,A)=\abs{x_0-a}<r/2$. Hence $x_0\in B_k(a,r/2)$. 
We also have 
\[
\dist(a,B)\ge\dist(x_0,B)-\abs{x_0-a}>10\Li r-r/2>9\Li r. 
\]
Therefore $B_k(a,9\Li r)\cap B=\emptyset$. 
Since  
\[
B_k(a,r)\ss\CC(a,r,2\Li r)\ss\DD_{a,r}\ss\CC(a,r,3\Li r)\ss B_k(a,4\Li r), 
\] 
we have 
\begin{equation}\label{eq:end.common.part.2}
	\DD_{a,r}\cap B=\emptyset 
	\quad \textnormal{ and } \quad 
	\dist(x_0,\partial\DD_{a,r})>r/2. 
\end{equation}
Notice also that $x_0\in \DD_{a,r}$. 

We are going to prove that $x_0$ satisfies~\eqref{eq:ule2g-g}, with some constants $c_1,c_2$ independent of $x_0$.
We may assume without loss of generality that $U_a=I$. We may also assume that 
\[
A\cap \DD_{a,R}=\mathcal{G}(\phi)=
\big\{(x_1,x''): \; x''=\phi(x'), a'-R<x'<a'+R\big\}. 
\]
The Lipschitz curve $A$ intersects $\partial\DD_{a,r}$ in two points, 
say $y$ and $z$, with $y'=a'-r$ and $z'=a'+r$. 
Consider the function $v\colon\DD_{a,r}\to\R$ given by 
\[
v(x):=g\left(\frac{\abs{x-y}}{8\Li}\right)+g\left(\frac{\abs{x-z}}{8\Li}\right).
\]
Notice that 
$\abs{a''-y''}\le Lr$ and 
$\abs{a''-z''}\le Lr$. Therefore 
for $x\in\bar \DD_{a,r}$, 
\begin{equation}\label{eq:(4L+2)r}
\max\{\abs{x-y},\abs{x-z}\}
\le 
(4L+2)r<8Lr. 
\end{equation}
Since $r<\al$, it follows that $v$ is well-defined on $\bar \DD_{a,r}$, except for the points $y,z$. 
\begin{lemma}\label{lem:estim-g-Lipsch-curve}
	For all $x\in\partial\DD_{a,r}\sm\{y,z\}$ we have 
	\[
	g(\dist(x,A))\le v(x)-g(r). 
	\]
\end{lemma}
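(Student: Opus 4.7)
Fix $x\in\partial\DD_{a,r}\sm\{y,z\}$ and introduce the shorthand
\[
d:=|x''-\varphi(x')|, \qquad \xi:=|x'-y'|, \qquad \zeta:=|x'-z'|.
\]
Since the $x'$-projection of $\DD_{a,r}$ lies in $(a'-r,a'+r)$, the point $x'$ sits between $y'=a'-r$ and $z'=a'+r$, so $\xi+\zeta=2r$. The plan is to use Lemma~\ref{lem:old.Claim.1} to bound $\dist(x,A)$ from below and the Lipschitz property of $\varphi$ to bound $t_y,t_z$ from above, and then to translate these geometric bounds to inequalities for $g$ through monotonicity, separating cases according to where $x$ lies on $\partial\DD_{a,r}$.

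First, Lemma~\ref{lem:old.Claim.1} gives $\dist(x,A)\ge d/(L+1)$, hence $g(\dist(x,A))\le g(d/(L+1))$ since $g$ is decreasing. Next, writing $t_y=\sqrt{\xi^2+|x''-y''|^2}$ and using $|x''-y''|=|x''-\varphi(y')|\le d+L\xi$ together with $\sqrt{u^2+v^2}\le u+v$, I would obtain
\[
t_y\le d+(L+1)\xi, \qquad t_z\le d+(L+1)\zeta,
\]
and in particular $t_y,t_z\le (4L+2)r<8Lr$ by \eqref{eq:(4L+2)r}, so $t_y/(8L)$ and $t_z/(8L)$ both lie in $(0,r)$ and
\[
g(t_y/(8L))\ge g(r), \qquad g(t_z/(8L))\ge g(r).
\]
It therefore suffices to show $g(d/(L+1))+g(r)\le g(t_y/(8L))+g(t_z/(8L))$.

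I would split the argument into two geometric regimes, dictated by the constraint $\partial\DD\subset\{|y_1|=1\}\cup\{|y_1|<1,\,|y_2|\ge 2L\}$ inherited from the inclusion $\CC(0,1,2L)\subset\DD$. If $|x'-a'|=r$, then $\xi\in\{0,2r\}$; say $\xi=0$, so $t_y=d$. Since $8L\ge L+1$, we have $t_y/(8L)\le d/(L+1)\le \dist(x,A)$, hence $g(\dist(x,A))\le g(t_y/(8L))$, and adding the trivial $g(r)\le g(t_z/(8L))$ (valid because $t_z/(8L)<r$) yields the claim. In the complementary regime $|x'-a'|<r$, the constraint forces $|x''-a''|\ge 2Lr$, so $d\ge |x''-a''|-L|x'-a'|\ge Lr$; then $\dist(x,A)\ge Lr/(L+1)\ge r/2$ and $g(d/(L+1))\le g(Lr/(L+1))$. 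Combined with the uniform lower bound $v(x)\ge 2g((4L+2)r/(8L))$ coming from $t_y,t_z\le(4L+2)r$, the arithmetic inequality $(4L+2)/(8L)<L/(L+1)$ for $L\ge 2$ gives $g(d/(L+1))\le v(x)/2$, and since $(4L+2)r/(8L)<r$ forces $g((4L+2)r/(8L))\ge g(r)$, we get $g(r)\le v(x)-g(d/(L+1))$, which is exactly what is needed.

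The main obstacle I anticipate is the ``middle'' regime: because the RHS is a sum of two $g$-values and the LHS is a single $g$-value, one cannot dominate the LHS by one of the RHS terms alone, and one must carefully exploit the specific cushion built into the constant $8L$ (so that $t_y/(8L),t_z/(8L)$ are strictly smaller than $r$ by a fixed factor). Keeping track of how the Lipschitz constant $L$ interacts with both the lower bound $d/(L+1)$ for $\dist(x,A)$ and the height bound $|x''-a''|\ge 2Lr$ on the ``caps'' of $\partial\DD_{a,r}$ is the place where the argument is most delicate.
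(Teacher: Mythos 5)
Your proof is correct and takes essentially the same route as the paper's: the same split between the side pieces $|x'-a'|=r$ and the caps $|x''-a''|\ge 2\Li r$ of $\partial\DD_{a,r}$, the same use of Lemma~\ref{lem:old.Claim.1}, the Lipschitz bound $|\varphi(x')-a''|\le \Li r$, the bound \eqref{eq:(4L+2)r}, and monotonicity of $g$. The only difference is cosmetic bookkeeping in the middle case: you bound both $g(d/(\Li+1))$ and $g(r)$ by $g\big((4\Li+2)r/(8\Li)\big)\le v(x)/2$, whereas the paper pairs $g(\dist(x,A))\le g\big(|x-y|/(8\Li)\big)$ with $g(r)\le g\big(|x-z|/(8\Li)\big)$.
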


\begin{proof}
	Take $x\in\partial\DD_{a,r}\sm\{y,z\}$. 
We distinguish three cases. First, suppose that $x'=y'$. Using Lemma~\ref{lem:old.Claim.1} and that $\varphi(x')=y''$, we have 
	\begin{equation}\label{eq:ineq-lem-g-and-v}
		\dist(x,A)\ge\frac{\abs{x''-y''}}{L+1}
		=
		\frac{\abs{x-y}}{L+1}.
	\end{equation}
	In particular, 
	\begin{equation}\label{eq:8lambda}
		\dist(x,A)\ge\frac{\abs{x-y}}{8\Li},
	\end{equation}
	and hence the statement follows using that $g$ decreases. 	
	
	Next, suppose that $y'<x'<z'$. In this case, 
\begin{align*}
	\dist(x,A)
	&\ge
	\frac{\abs{x''-\phi(x')}}{\Li+1}
    \ge
    \frac{\abs{x''-a''}-\abs{\phi(x')-a''}}{\Li+1} \\
    &\ge
    \frac{2\Li r-\Li r}{\Li+1}
	\ge 
	\frac{2r}{3}
	\ge
	\frac{\abs{x-y}}{8\Li}. 
\end{align*}
Here in the first inequality we used 
 	Lemma~\ref{lem:old.Claim.1}. In the third and fourth 
	inequalities we used that the Lipschitz constant of $\phi$ is $\Li$ and that $L\ge 2$. In the last inequality, we used that $\abs{x-y}<(4L+2)r$ 
	(see~\eqref{eq:(4L+2)r}).  
	We conclude that in this situation \eqref{eq:8lambda} also holds. 
	
	The third case $x_1=z_1$ is completely analogous to the first case $x_1=y_1$, with the roles of $y$ and $z$ interchanged. 
\end{proof}
Notice that $-v$ is subharmonic in $\DD_{a,r}$ (see \cite[Theorem~2.2]{Kenn-Hayman}), 
so $u-v$ is subharmonic in $\DD_{a,r}$. We also have 
\begin{equation}\label{eq:+infty}
\lim_{\DD_{a,r}\ni x\to y}v(x)=\lim_{\DD_{a,r}\ni x\to z}v(x)=+\infty
\end{equation}

Fix any $w\in\partial\DD_{a,r}$. 
Notice that 
\[
|x_0-w|\le |x_0-a|+|a-w|\le \Big(\frac 12+4L\Big)r \le \frac 92 Lr=\frac 9{20}\dist(x_0,B). 
\]
By~\eqref{eq:key.region.2} and the last inequality, we get  
\[
\dist(x_0,B)-\dist(x_0,A)\ge \big(1-\frac 1 {20}\big)\dist(x_0,B)\ge 2|x_0-w|. 
\]
Therefore 
\[
\dist(w,B)-\dist(w,A)\ge \dist(x_0,B)-\dist(x_0,A)-2|x_0-w|\ge 0. 
\]
We conclude that 
$\dist(w,A\cup B)=\dist(w,A)$ for any $w\in\partial\DD_{a,r}$. 
  
Combining this fact, Lemma~\ref{lem:estim-g-Lipsch-curve} and equation \eqref{eq:+infty}, we obtain
\[
\limsup_{\DD_{a,r}\ni x\to w}\, (u-v)(x)\le-g(r)
\]
for all $w\in\partial\DD_{a,r}$. Hence, by the maximum principle for subharmonic functions (see \cite[Theorem~2.2]{Kenn-Hayman}), for all $x\in\DD_{a,r}$ (in particular for $x_0$), we have 
\[
u(x)\le v(x)-g(r). 
\]

Since $\dist(x_0,\partial\DD_{a,r})>r/2$ (see~\eqref{eq:end.common.part.2}), 
we obtain $\abs{x_0-y}\ge r/2$ and $\abs{x_0-z}\ge r/2$. 
Then, using \eqref{eq:def.r.2} we get 
\begin{align*}
u(x_0)
&\le 
v(x_0)-g(r)\\
&\le
2g\left(\frac{r}{16\Li}\right)-g(r)\\
&=
2g\left(\frac{\dist(x_0,B)}{160\Li^2}\right)-g\left(\frac{\dist(x_0,B)}{10\Li}\right). 
\end{align*}
Therefore, taking $c_1:=\frac 1{160\Li^2}$, $c_2:=\frac 1 {10\Li}$ we conclude the proof of Theorem~\ref{thm:mainP1}. \qed

\section{Quantitative versions of Domar's theorems}
\label{Sec:quant-vers-Domar}

Let $\Omega$ be a bounded domain in $\R^k$, $k\ge2$ and let $F\colon\Omega\to[0,\infty]$ be a given nonnegative upper semicontinuous function.  
We recall that $\CC_F$ denotes the class of all subharmonic functions $u\colon\Omega\to \R$ 
such that $u\le F$ and define $M(x)$ by \eqref{eq:M(x)}. 

Let $\eta: (\al,\be)\to \R$ be a given decreasing function, where $(\al,\be)$ is a (finite or infinite) subinterval of $\R$.  We set    
\begin{equation}\label{eq:eta-minus}
\eta^-(s):=\inf\{t\in(\al,\be):\; \eta(t)\le s\}, 
\quad s\in (\textstyle{\lim_{\be}\eta}, \infty). 
\end{equation}
Notice that $\eta^-$ is right-continuous
and that $\eta^{-}(s)=\al$ for $s\in [\lim_{\al}\eta,\infty)$. 
If $\eta$ is continuous and strictly decreasing, then $\eta^-$ coincides 
on $(\lim_\be \eta, \lim_\al\eta)$
with $\eta^{-1}$, the inverse of $\eta$. 

We define $\eta_+$, the right-continuous regularization of the function $\eta$ by $\eta_+(t)=\lim_{t^+}\eta(t)$. 
Functions $\eta$ and $\eta_+$ differ only on a countable set, and $\eta^-(s)=(\eta_+)^-(s)$ for all $s$. 

If $\eta$ decreases and is right-continuous, then  $\eta(\eta^-(s))\le s$ for all $s$. 

Recall that given a measurable function $H\colon\R\to[0,\infty]$, 
if $h\colon\Om\to[0,\infty]$ is its distribution function, 
i.e. $h(s):=m(\{H>s\})$, then for any $t>0$ we have 
\begin{equation}\label{eq:distrib.funct}
	\int_{\{H>t\}}H(x)\, dx=\int_{t}^{\infty}h(s)\, ds+th(t).
\end{equation}

Denote by $f\colon[0,\infty)\to[0,\infty]$ the distribution function of $F$. 
Then $f^-$ is equidistributed with $F$, that is, for any $y_0>0$, the one-dimensional measure 
of the set where $f^-(s)>y_0$ is the same as the $k$-dimensional measure of the subset of $\Om$, where $F(x)>y_0$. 
Similarly, $\log^+ f^-$ is equidistributed with $\log^+F$. 

In this section we will obtain a quantitative version of the following results by Domar \cite{Domar} 
using a slight refinement of his arguments. 
We will use a notation quite similar to that  employed in \cite{Domar} to facilitate the comparison. 

\begin{theoremA}[{\cite[Theorem 2]{Domar}}]
	If for some $\eps >0$, 
	\begin{equation}
		\label{eps-cond}
		\int_\Om 
		\big[\log^+ F(x)\big]^{k-1+\eps}\, dx <\infty, 
	\end{equation}
	then $M(x)$ is bounded on every compact subset of $\Om$. 
\end{theoremA}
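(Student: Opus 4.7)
The plan is to follow Domar's iterative bootstrap, making every step quantitative so as to extract an explicit bound of the form~\eqref{eq:estim-M}. Fix $u \in \CC_F$ and $x_0 \in \Om$ with $\rho := \dist(x_0, \partial\Om) > 0$, and write $T := u(x_0)$; the aim is to bound $T$ from above in terms of $\rho$ and the distribution function $f$ of $F$, which will imply the uniform bound on $M(x_0)$ by taking the supremum over $\CC_F$.

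The engine is a quantitative propagation lemma built from the sub-mean-value inequality $u(x) \le |B(x,r)|^{-1}\int_{B(x,r)} u\, dy$ combined with the pointwise bound $u \le F$. Applying the layer-cake identity~\eqref{eq:distrib.funct} to $u^+$ and using $\{u > s\} \subset \{F > s\}$ gives
\[
t\,|B(x,r)| \;\le\; \int_0^{M_B} \min\bigl(f(s),\, |B(x,r)|\bigr)\, ds,
\]
where $u(x) = t$ and $M_B := \sup_{B(x,r)} u$. Given $t$, one looks for a point $x' \in B(x, r)$ with $u(x')$ exceeding a chosen level $t' > t$, by demanding that the above inequality fails when $M_B$ is replaced by $t'$; this produces a trade-off between the increment $t' - t$ and the cost $r$, both expressed through $f$ (equivalently through its right-continuous inverse $f^-$ introduced at the start of this section).

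Iterating this step produces a sequence $(x_n, t_n)$ with $t_n \to \infty$ and $|x_{n+1} - x_n| \le r_n$. The argument concludes by showing that, if $T$ is above an explicit threshold $T_0(\rho, F)$, then $\sum_n r_n < \rho$; hence $(x_n)$ accumulates inside $\Om$ while $u(x_n) \to \infty$, contradicting the local integrability of the subharmonic function $u$ on compact subsets of $\Om$. Reading off $T_0(\rho, F)$ gives the explicit function $h(\rho)$ of~\eqref{eq:estim-M}.

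The main obstacle is choosing the increments $(t_n, r_n)$ so that $\sum_n r_n$ converges. From~\eqref{eps-cond}, since $f$ is decreasing, one extracts the pointwise bound $f(t) \lesssim (\log t)^{-(k-1+\eps)}$ for large $t$, but a naive geometric scheme $t_{n+1} = (1+\tht) t_n$ combined with this bound only yields summability for $\eps > 1$. The delicate part is tuning the scheme so that the increments shrink just fast enough to keep the propagation lemma usable while still giving convergence for every $\eps > 0$, with $k-1+\eps$ being precisely the threshold exponent. Keeping track of the constants along the way — and using $f^-$ in place of $f$ to dispense with regularity hypotheses on $F$ — is the \emph{slight refinement} of Domar's original argument alluded to at the start of this section and is what yields the explicit $h$.
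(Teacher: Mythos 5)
Your overall architecture --- a sub-mean-value propagation step, iterated so that the total displacement $\sum_n r_n$ stays below $\dist(x_0,\partial\Om)$, plus the fact that a subharmonic function cannot blow up along a sequence converging inside $\Om$ --- is exactly the skeleton of the paper's proof (Lemma~\ref{lem:Domar.L1}, Corollary~\ref{cor:Domar.L1}, Lemma~\ref{lem:dist.delta}). The genuine gap is in the one step you leave open, and it is the heart of the theorem: you propose to extract from \eqref{eps-cond} the pointwise bound $f(t)\lesssim(\log t)^{-(k-1+\eps)}$ and then recover summability of $\sum_n r_n$ for every $\eps>0$ by tuning the increments $t_{n+1}/t_n$. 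This cannot be made to work for $0<\eps\le 1$. In your propagation lemma, the radius that knowledge of $f$ alone can certify for climbing from level $t_n$ to $t_{n+1}$ is of order $\bigl(\tfrac{t_{n+1}}{t_n}\,f(c\,t_n)\bigr)^{1/k}$ with a fixed $c\in(0,1)$ (off the set $\{F\ge c\,t_n\}$ one only knows $u\le F<c\,t_n$, and on it one pays the target level $t_{n+1}$ against the measure $f(c\,t_n)$). Writing $t_n=e^{\sigma_n}$, the total cost is of order $\sum_n e^{(\sigma_{n+1}-\sigma_n)/k}\,\sigma_n^{-(k-1+\eps)/k}$, and optimizing over all step sequences (the optimum is $\sigma_{n+1}-\sigma_n\asymp 1$, i.e.\ geometric levels) still leaves a series comparable to $\int^\infty\sigma^{-(k-1+\eps)/k}\,d\sigma$, divergent for $\eps\le1$. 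The loss is not in the choice of increments but in passing from level sets to the global tail function $f$: even keeping \eqref{eps-cond} in full, an $F$ whose shell measures satisfy $m(\{a^j<F\le a^{j+1}\})\approx j^{-(k+\eps)}(\log j)^{-2}$ obeys \eqref{eps-cond}, yet $\sum_n f(a^n)^{1/k}=\infty$ when $\eps\le 1$, so no bookkeeping based on $f$ alone can close the argument.

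What is missing is Domar's device, which the paper follows: measure the cost of each step by the disjoint level shells of $u$ itself, $E_\nu=\{a^\nu\le u<a^{\nu+1}\}$, $\ell_\nu=m(E_\nu)$, with a fixed lag $\lambda$ chosen via \eqref{eq:D.lambda}, so that the step-$\nu$ radius is controlled by $(\ell_{\nu-\lambda}+\dotsb+\ell_\nu)^{1/k}$; then bound the total displacement by H\"older's inequality against the weight $\nu^{(k-1+\eps)/k}$,
\[
\sum_\nu\ell_\nu^{1/k}\le\Bigl(\sum_\nu\nu^{-\frac{k-1+\eps}{k-1}}\Bigr)^{\frac{k-1}{k}}\Bigl(\sum_\nu\nu^{k-1+\eps}\ell_\nu\Bigr)^{\frac1k},
\]
where the disjointness of the $E_\nu$ and $u\le F$ give $\sum_\nu\nu^{k-1+\eps}\ell_\nu\le\int_\Om[\log_a^+F]^{k-1+\eps}\,dx<\infty$ by \eqref{eps-cond}, while the first factor converges because $(k-1+\eps)/(k-1)>1$ for \emph{every} $\eps>0$. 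This H\"older-plus-disjointness step (the proof of Lemma~\ref{lem:dist.delta} in the Appendix) is precisely what replaces your pointwise bound on $f$ and removes the spurious threshold $\eps>1$; as written, your scheme proves the statement only for $\eps>1$. A minor further point: the terminal contradiction is cleaner from upper boundedness of $u$ on compact subsets (upper semicontinuity) than from ``local integrability'', although the mean-value inequality lets you repair that phrasing.
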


\begin{theoremB}[{\cite[Theorem 3]{Domar}}]
	Suppose that 
	$F(x_1,\dots, x_k)$ only depends on the first $\q$ variables, where $1\le \q\le k-1$. If 
	\begin{equation}
		\label{q-cond}
		\int_0^{\abs\Om}
		\log^+ f^-(s)d (s^{1/\q}) <+\infty, 
	\end{equation}
	then $M(x)$ is bounded on every compact subset of $\Om$. 
\end{theoremB}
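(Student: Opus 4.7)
The plan is to mimic Domar's iterative ``doubling'' argument from the proof of Theorem~A, but to exploit the cylindrical structure that $F$ inherits from the hypothesis. Since $F(x_1,\dots,x_k)$ depends only on the first $\q$ coordinates, each super-level set $\{F>t\}\cap\Om$ has the form $(E_t\times\R^{k-\q})\cap\Om$ with $E_t\ss\R^\q$, so every measure estimate can be pushed down to $\R^\q$. This drop in effective dimension from $k$ to $\q$ is precisely what converts Domar's integrability hypothesis~\eqref{eps-cond} into the weaker condition~\eqref{q-cond} with its $\q$-dimensional measure $d(s^{1/\q})$.

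\textbf{Local step.} Given $u\in\CC_F$ with $u(x_0)\ge s$, suppose towards contradiction that $u\le 2s$ on a ball $B(x_0,r)\ss\Om$. The sub-mean-value inequality, applied to the splitting $u=u\mathbf 1_{\{u\le s/2\}}+u\mathbf 1_{\{u>s/2\}}$, quickly forces $m(B(x_0,r)\cap\{u>s/2\})\gtrsim r^k$. Since $\{u>s/2\}\ss\{F>s/2\}$ and the latter is cylindrical, Fubini in the $\q/(k-\q)$ splitting of coordinates reduces the inequality to $r^\q\lesssim m_\q\bigl(E_{s/2}\cap B_\q(x_0',r)\bigr)$; comparing $m_\q(E_{s/2})$ with the distribution function $f$ of $F$ on $\Om$ produces a bound $r\le c\bigl(f(s/2)\bigr)^{1/\q}$ with $c=c(k,\q,\Om)$. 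Contrapositively, for any $r$ exceeding this threshold some $x_1\in B(x_0,r)$ must satisfy $u(x_1)\ge 2s$.

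\textbf{Iteration and conclusion.} Iterating yields $(x_n)_{n\ge 0}$ with $u(x_n)\ge 2^n s_0$ and $|x_{n+1}-x_n|\le c\bigl(f(2^{n-1}s_0)\bigr)^{1/\q}$, valid as long as each step stays in $\Om$. A change of variable $s=f(t)$ in~\eqref{q-cond} gives the equivalent condition
\[
\int_1^{\infty} f(t)^{1/\q}\,\frac{dt}{t}<\infty,
\]
which is in turn equivalent, via dyadic comparison, to $\sum_n f(2^n s_0)^{1/\q}<\infty$ with tail tending to $0$ as $s_0\to\infty$. Choosing $s_0$ large enough in terms of $\dist(x_0,\partial\Om)$ keeps $\sum_n|x_{n+1}-x_n|$ smaller than $\dist(x_0,\partial\Om)$; then $(x_n)$ converges to some $x_\infty\in\Om$ with $u(x_\infty)=+\infty$, contradicting the finiteness of the subharmonic function $u$. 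Hence $M(x_0)$ is finite, with an explicit bound $s_0=s_0\bigl(\dist(x_0,\partial\Om)\bigr)$ of the shape~\eqref{eq:estim-M}.

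\textbf{Main obstacle.} The delicate point is the Fubini/projection step: the exponent $1/\q$ arises from sharply isolating the contribution of the last $k-\q$ coordinates, so that the final constant depends only on $k$, $\q$ and the geometry of $\Om$. A naive estimate would only recover the full-dimensional exponent $1/k$ of Theorem~A and miss the whole point of the theorem. The subsequent dyadic comparison between the Lebesgue--Stieltjes integral in~\eqref{q-cond} and the discrete series from the iteration is routine, once the monotonicity and right-continuity properties of $f^-$ from the preliminaries immediately preceding the theorem are invoked.
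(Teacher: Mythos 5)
Your overall strategy is the same one the paper (following Domar) uses for its quantitative version of Theorem~B: a chain of points along which $u$ doubles, a sub-mean-value estimate on a ball in which the cylindrical structure of the super-level sets of $F$ is meant to supply the exponent $1/\q$, and summability of the step lengths extracted from \eqref{q-cond}. The genuine gap is in your local step, exactly at the point you label the main obstacle but then assert rather than prove. From $m\big(B(x_0,r)\cap\{u>s/2\}\big)\gtrsim r^k$ and Fubini you correctly get $r^{\q}\lesssim m_{\q}\big(E_{s/2}\cap B_{\q}(x_0',r)\big)$ (here $x_0'$ is the projection to the first $\q$ coordinates and $m_{\q},m_{\p}$ denote Lebesgue measure in the corresponding dimensions). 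But the passage to $r\le c\,f(s/2)^{1/\q}$ with $c=c(k,\q,\Om)$ does not follow: $f(s/2)=\int_{E_{s/2}}m_{\p}\big(\{x'':(y',x'')\in\Om\}\big)\,dy'$ weights the projected set by the $\p$-dimensional thickness of the fibres of $\Om$, and the only thickness information that $B(x_0,r)\ss\Om$ gives is $m_{\p}\big(\{x'':(y',x'')\in\Om\}\big)\ge c_{\p}(r^2-|y'-x_0'|^2)^{\p/2}$, which degenerates at the edge of $B_{\q}(x_0',r)$; using it yields only $f(s/2)\gtrsim r^{k}$, i.e.\ the Theorem~A exponent $1/k$ you are trying to beat. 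The comparison $m_{\q}\big(E_{s/2}\cap B_{\q}(x_0',r)\big)\le C(\Om)\,f(s/2)$ that your step implicitly requires is false uniformly in $x_0$ (concentrate $E_{s/2}$ where the fibres of $\Om$ are thin); the constant must blow up as $x_0$ approaches $\partial\Om$, as is also forced dimensionally, since $f$ is a $k$-dimensional volume.

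The argument is repairable, but it has to be restructured: fix the compact set $K$, put $\rho=\dist(K,\partial\Om)$, keep the whole chain within distance $\rho/2$ of its starting point by the usual bootstrap, and run the local step with radii at most $\rho/4$; then $B(x_n,\rho/2)\ss\Om$ guarantees that all fibres over $B_{\q}(x_n',\rho/4)$ have $\p$-measure at least $c\rho^{\p}$, which gives the correct bound $r\le c\,\rho^{-\p/\q}f(s/2)^{1/\q}$, after which your dyadic summation and the choice of $s_0=s_0(\rho)$ go through. Note that the paper's route (Lemma~\ref{lem:Domar.L2}, Corollary~\ref{cor:Domar.L2}, Lemma~\ref{lem:dist.rho}, Theorem~\ref{thm-Domar-3-quant}) sidesteps this difficulty by formulating the local lemma in terms of the intrinsic quantity $\mu_\q(\nu)=\sup_{B_k(x,R)\ss\Om}m\big(F_\nu\cap B_k(x,R)\big)/R^{\p}$, so no fibre-thickness conversion is needed inside the iteration; the conversion from \eqref{q-cond}, stated with the $k$-dimensional distribution function $f$, to a usable bound on the step lengths (the paper's passage from \eqref{q-cond} to \eqref{q-cond2}) is precisely the point your proposal leaves unjustified.
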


We remark that in his statements, Domar understood that subharmonic functions 
	cannot take value $-\infty$. 
	Here we allow this value. This does not affect Domar's statements. Indeed, 
	$\max(u,0)$ is a subharmonic function for any subharmonic function $u$, so that 
	$M(x)$ does not depend on whether in~\eqref{eq:M(x)}, suharmonic functions that take value $-\infty$ are allowed or not.  

As Domar mentions in \cite{Domar}, if $\q>1$, then~\eqref{q-cond} holds if 
\begin{equation}\label{w-cond}
\int_\Om 
\big[\log^+ F(x)\big]^w\, dx <\infty 
\end{equation}
for some $w<\q$. If $\q=1$, then~\eqref{q-cond} is equivalent to condition 
\eqref{w-cond} for $w=1$. 

These results by Domar \cite{Domar} are vast generalizations of Levinson's 
$\log\log$ theorem 
\cite{Levinson} on analytic functions. 
It is worth mentioning that in~\cite{Domar1988}, Domar has generalized his results in \cite{Domar} to solutions of certain elliptic and parabolic equations. 
In \cite{Logunov}, Logunov gave an analogue of Levinson's $\log\log$ theorem for harmonic functions. Logunov and Papazov in 
\cite{LogunPapaz} give a version of Levinson's theorem for 
solutions of elliptic equations and relate this result with what they call a three ball inequality.

\subsection{Our version of Domar's Theorem A}
From now on, we fix a constant $a>1$. 

Denote by $S_R$ the volume of the $k$-dimensional ball $B_k(0,R)$. 
Given $u\in\CC_F$, for any $\nu\in\R$ set 
\[
E_\nu:=\{a^\nu\le u<a^{\nu+1}\}\ss\Omega
\quad\tn{ and }\quad
\ell_\nu:=m(E_\nu). 
\]
Notice that if \eqref{eps-cond} holds, then $\ell_\nu$ is finite for all $\nu>0$. 

\begin{lemma}[See {\cite[Lemma 1]{Domar}}]\label{lem:Domar.L1}
	Let $D$ be a positive constant and $\lambda$ a positive integer, both so large that 
	\begin{equation}\label{eq:D.lambda}
		\frac{a}{D^kS_1}+\frac{1}{a^\lambda}\le1.
	\end{equation}
	Suppose that for some $\nu>\lambda$ and some $x_\nu\in\Omega$ we have 
	\[
	u(x_\nu)\ge a^\nu
	\quad\tn{ and }\quad
	B_k(x_\nu,R)\ss\Omega,
	\]
	where 
	\[
	R>D(\ell_{\nu-\lambda}+\dotsb+\ell_\nu)^{1/k}.
	\]
	Then there exists $x_{\nu+1}\in B_k(x_\nu,R)$ such that 
	\[
	u(x_{\nu+1})\ge a^{\nu+1}.
	\]
\end{lemma}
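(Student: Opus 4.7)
The lemma is a standard sub-mean-value argument, and the natural route is by contradiction: assume that $u(y)<a^{\nu+1}$ at every $y\in B_k(x_\nu,R)$ and derive a contradiction with the sub-mean-value inequality at $x_\nu$. A preliminary reduction lets us assume $u\ge 0$ on $\Om$, because $u^+=\max(u,0)$ is again subharmonic, still satisfies $u^+\le F$ (since $F\ge 0$), has the same level sets $E_j$ for $j\ge 0$, and $u^+(x_\nu)\ge a^\nu$; also, producing $x_{\nu+1}$ with $u^+(x_{\nu+1})\ge a^{\nu+1}>0$ is equivalent to producing it for $u$.

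The main step is the decomposition of the mean value over $B_k(x_\nu,R)$. Writing $S_1R^k=m(B_k(x_\nu,R))$ and splitting the ball according to whether $u<a^{\nu-\lambda}$ or $u\in E_j$ for some $j\in\{\nu-\lambda,\dots,\nu\}$ (the upper piece $\{u\ge a^{\nu+1}\}$ being empty by the contradiction hypothesis), one gets
\[
\int_{B_k(x_\nu,R)} u(y)\,dy \;\le\; a^{\nu-\lambda} S_1 R^k \;+\; \sum_{j=\nu-\lambda}^{\nu} a^{j+1}\,m\big(E_j\cap B_k(x_\nu,R)\big),
\]
and bounding each $m(E_j\cap B_k(x_\nu,R))$ by $\ell_j$ and each $a^{j+1}$ by $a^{\nu+1}$ gives an estimate of the integral in terms of $\ell_{\nu-\lambda}+\dots+\ell_\nu$.

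The sub-mean-value property $u(x_\nu)\le \frac{1}{S_1R^k}\int_{B_k(x_\nu,R)}u$, together with $u(x_\nu)\ge a^\nu$, then yields after dividing by $a^\nu$:
\[
1 \;\le\; \frac{1}{a^\lambda} \;+\; \frac{a}{S_1 R^k}\sum_{j=\nu-\lambda}^{\nu}\ell_j.
\]
Finally, the hypothesis $R^k>D^k(\ell_{\nu-\lambda}+\dots+\ell_\nu)$ turns the last sum into something strictly less than $\frac{a}{S_1 D^k}$, which together with \eqref{eq:D.lambda} produces $1<1$, the desired contradiction.

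The argument is essentially routine; the only delicate point is making sure the decomposition of the integral uses the correct endpoint bounds on $u$ on each slice $E_j$ (namely $u<a^{j+1}$, not $u\le a^{j+1}$) and that the $a^{j+1}$ factors telescope uniformly into a single factor of $a$ after normalizing by $a^\nu$; this is what forces the factor $a$ rather than a factor $1$ in condition \eqref{eq:D.lambda}. I do not anticipate any real obstacle beyond bookkeeping.
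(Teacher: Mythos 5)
Your proposal is correct and follows essentially the same route as the paper (and Domar): argue by contradiction, apply the volume sub-mean-value inequality on $B_k(x_\nu,R)$, split the ball into the low part $\{u<a^{\nu-\lambda}\}$ and the slices $E_{\nu-\lambda},\dots,E_\nu$ (empty top slice by the contradiction hypothesis), and combine the resulting bound with \eqref{eq:D.lambda}; this is exactly the computation the paper carries out for the analogous Lemma~\ref{lem:Domar.L2}. The preliminary reduction to $u\ge 0$ is unnecessary but harmless.
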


Domar used the value $a=e$, but here we will make some advantage of choosing finally~$a$ close to ~$1$. 

As an immediate consequence of the above lemma we obtain the following result. 

\begin{corollary}\label{cor:Domar.L1}
	Let $D$ and $\lambda$ be as in Lemma~\ref{lem:Domar.L1}. 
	Suppose that $u(x_t)\ge a^t$ for some $u\in\CC_F$ and some $x_t\in\Omega$, 
	where $t>\lambda$. Let $s$ be a real number such that $s-t$ is a positive integer and let 
	\[
	R>D\sum_{n=0}^{s-t-1}(\ell_{t+n-\lambda}+\dotsb+\ell_{t+n})^{1/k}. 
	\]
	Then either $B_k(x_t,R)$ intersects the boundary of $\Omega$ or 
	$u(x_s)\ge a^s$ for some $x_s\in B_k(x_t,R)$. 
\end{corollary}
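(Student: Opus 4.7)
The plan is to prove the corollary by iterating Lemma~\ref{lem:Domar.L1} exactly $s-t$ times, producing a chain of points $x_t, x_{t+1}, \dots, x_s$ with $u(x_{t+n})\ge a^{t+n}$ and with a priori control on their cumulative displacement from $x_t$. The strict inequality in the hypothesis on $R$ should give us enough slack to pick admissible radii for each application of the lemma.

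Assume that $B_k(x_t, R)$ does not intersect $\partial\Omega$; since $x_t\in\Omega$, this means $B_k(x_t, R)\subset\Omega$. By the strict inequality in the hypothesis, I can choose positive numbers $R_0, R_1,\dots, R_{s-t-1}$ satisfying
\[
R_n > D(\ell_{t+n-\lambda}+\dotsb+\ell_{t+n})^{1/k} \quad \text{for } n=0,\dots,s-t-1,
\]
with $\sum_{n=0}^{s-t-1} R_n < R$. I would then proceed by induction on $j\in\{0,1,\dots,s-t\}$ with the following hypothesis: one can find $x_{t+j}\in\Omega$ such that $u(x_{t+j})\ge a^{t+j}$ and $|x_{t+j}-x_t|\le R_0+\dotsb+R_{j-1}$ (empty sum for $j=0$). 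The base case $j=0$ is given. For the inductive step, note that
\[
B_k(x_{t+j}, R_j)\subset B_k\Big(x_t, \sum_{i=0}^{j} R_i\Big)\subset B_k(x_t,R)\subset\Omega,
\]
so $B_k(x_{t+j},R_j)$ does not touch $\partial\Omega$. Since $t+j\ge t>\lambda$ and $R_j$ satisfies the lower bound required by Lemma~\ref{lem:Domar.L1} with $\nu=t+j$, the lemma produces $x_{t+j+1}\in B_k(x_{t+j},R_j)$ with $u(x_{t+j+1})\ge a^{t+j+1}$. Then $|x_{t+j+1}-x_t|\le|x_{t+j}-x_t|+R_j\le\sum_{i=0}^{j} R_i$, closing the induction. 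Taking $j=s-t$ yields $x_s\in B_k(x_t,R)$ with $u(x_s)\ge a^s$, as required.

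I do not expect any real obstacle here: the proof is essentially a bookkeeping argument iterating the preceding lemma. The only point worth care is to verify at every step that the ball in which we apply Lemma~\ref{lem:Domar.L1} remains inside $\Omega$, which is exactly why we arranged $\sum R_n < R$ and assumed $B_k(x_t,R)\subset\Omega$.
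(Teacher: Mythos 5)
Your proof is correct and is precisely the intended argument: the paper states this corollary as an immediate consequence of Lemma~\ref{lem:Domar.L1}, namely the iteration you carry out, and your bookkeeping (splitting the slack in the strict inequality into radii $R_n$ and checking at each step that $B_k(x_{t+j},R_j)\subset B_k(x_t,R)\subset\Omega$) is exactly what makes that iteration rigorous. No gaps.
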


We set 
\[
f_1(t)=f(a^t), \quad t>0. 
\]

\begin{lemma}\label{lem:dist.delta}
	Let $D$ and $\lambda$ be as in Lemma~\ref{lem:Domar.L1}. 
	For $t\in(\lambda+1,\infty)$, let 
	\[
	\delta(t):=(\lambda+1)\pa{\frac{k-1}{\epsilon}}^{\frac{k-1}{k}}\frac{1}{(t-1-\lambda)^{\epsilon/k}}
	\]
	and 
	\[
	\psi(t)
	:=
	\co{
		\int_{(t-\lambda)^{k-1+\epsilon}}^{\infty}f_1(s^\frac{1}{k-1+\epsilon})\, ds+(t-\lambda)^{k-1+\epsilon}f_1(t-\lambda)
	}^{\frac1k},
	\]
	and set $\varphi(t):=\delta(t)\psi(t)$. 
	If for some $u\in\CC_F$, $t>\lambda+1$ and $x_t\in\Omega$, one has  
	$u(x_t)\ge a^t$, then 
	\[
	\dist(x_t,\partial\Omega)<D\varphi(t). 
	\]
\end{lemma}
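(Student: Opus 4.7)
The plan is to iterate Corollary~\ref{cor:Domar.L1} and exploit the upper semicontinuity of $u$ to force the ball produced by the iteration to meet $\partial\Omega$. More precisely, assuming for contradiction that $\dist(x_t,\partial\Omega) > D\varphi(t)$, I want to show that for every integer $N\ge 1$ there is a point $x_{t+N}\in\Omega$ lying in a single compactly contained subball around $x_t$ and satisfying $u(x_{t+N})\ge a^{t+N}$. Since $u$ is upper semicontinuous and the subball is compact in $\Omega$, $u$ would be bounded above there, producing the contradiction. To make this work through Corollary~\ref{cor:Domar.L1}, the essential task is the uniform bound
\[
\sum_{n=0}^{s-t-1}(\ell_{t+n-\lambda}+\dotsb+\ell_{t+n})^{1/k} \le \varphi(t), \qquad \text{for every integer }s>t;
\]
once this is in hand, choosing $R$ slightly greater than $D\varphi(t)$ but strictly less than $\dist(x_t,\partial\Omega)$ ensures $R>D\sum$ for every $s$ and $B_k(x_t,R)\subset\Omega$.

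The uniform bound breaks into three ingredients. First, subadditivity of $x\mapsto x^{1/k}$ (valid since $k\ge 2$) turns each block $(\ell_{t+n-\lambda}+\dotsb+\ell_{t+n})^{1/k}$ into a sum of single terms $\ell_m^{1/k}$; after summing over $n$ each $\ell_m^{1/k}$ with $m\ge t-\lambda$ is counted at most $\lambda+1$ times, so the total is bounded by $(\lambda+1)\sum_{m\ge t-\lambda}\ell_m^{1/k}$. Second, H\"older's inequality with exponents $k$ and $k/(k-1)$ splits this sum as
\[
\sum_{m\ge T}\ell_m^{1/k}
\le
\left(\sum_{m\ge T}\ell_m\, m^{k-1+\epsilon}\right)^{1/k}
\left(\sum_{m\ge T}m^{-(k-1+\epsilon)/(k-1)}\right)^{(k-1)/k};
\]
an integral comparison estimates the second factor by $\bigl(\tfrac{k-1}{\epsilon}\bigr)^{(k-1)/k}(t-1-\lambda)^{-\epsilon/k}$, which is precisely $\delta(t)/(\lambda+1)$. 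Third, since $u\le F$ and the $E_m$ are disjoint, the weighted sum is controlled by $\int_{\{G\ge T\}}G^{k-1+\epsilon}\,dx$, where $G:=\log_a^+F$. The distribution function of $G^{k-1+\epsilon}$ equals $s\mapsto f_1(s^{1/(k-1+\epsilon)})$, and applying~\eqref{eq:distrib.funct} with level $T^{k-1+\epsilon}$ and $T=t-\lambda$ recovers exactly the bracket defining $\psi(t)^k$. Multiplying the three estimates yields $(\lambda+1)\cdot\tfrac{\delta(t)}{\lambda+1}\cdot\psi(t)=\varphi(t)$, as required.

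The main obstacle I expect is careful bookkeeping with indices: $t$ is a real number while the $\ell_m$ are indexed by integers, so I would use $T=\lceil t-\lambda\rceil$ and track how the resulting $(T-1)^{-\epsilon/(k-1)}$ compares to the $(t-1-\lambda)^{-\epsilon/(k-1)}$ appearing in $\delta(t)$. There are also measure-theoretic subtleties in the comparison of $\{F\ge a^m\}$ with $\{F>a^m\}$ used in~\eqref{eq:distrib.funct}, which I would handle by enlarging $\ell_m$ to the distribution function of $F$ at nearby levels. Finally, enough slack must be built into the subadditivity step so that the final contradiction argument delivers the strict inequality $\dist(x_t,\partial\Omega)<D\varphi(t)$ stated in the lemma, rather than the non-strict version that falls out most naturally.
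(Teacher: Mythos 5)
Your proposal follows essentially the same route as the paper's proof in the Appendix: the bound $\dist(x_t,\partial\Omega)\le D\sum_{n\ge0}(\ell_{t+n-\lambda}+\dotsb+\ell_{t+n})^{1/k}$ obtained from Corollary~\ref{cor:Domar.L1} and boundedness of $u$ on compact subsets, then subadditivity of $x\mapsto x^{1/k}$, H\"older with exponents $k$ and $k/(k-1)$, the sum-versus-integral comparison producing $\delta(t)/(\lambda+1)$, and the distribution-function identity \eqref{eq:distrib.funct} yielding $\psi(t)^k$. Two small simplifications compared with what you anticipate: since $\ell_\nu$ is defined for every real $\nu$, no ceilings or integer re-indexing are needed, and the strict final inequality comes not from slack in the subadditivity step but from the strict comparison $\sum_{n\ge0}(t+n-\lambda)^{-\frac{k-1+\epsilon}{k-1}}<\int_{-1}^{\infty}(t+s-\lambda)^{-\frac{k-1+\epsilon}{k-1}}\,ds$ combined with the observation that $\{u>a^{t-\lambda}\}$ is open and contains $x_t$, so the weighted sum in the other H\"older factor is strictly positive.
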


This lemma is just a slight refinement of the computations given in the proof of Theorem~2 of \cite{Domar}. We postpone its proof to the Appendix. 

Notice that $\delta(t)\to0$ when $t\to\infty$, and \eqref{eps-cond} implies that $\psi$ is bounded: 
\begin{align*}
0
\le
\psi(t)
&=
\co{\int_{\{\log^+F> t-\lambda\}}[\log^+F(x)]^{k-1+\epsilon}\, dx}^{\frac{1}{k}}\\
&\le 
\co{\int_{\Omega}[\log^+F(x)]^{k-1+\epsilon}\, dx}^{\frac{1}{k}}
<
\infty.
\end{align*}
Consequently $\varphi(t)\to0$ when $t\to\infty$. Now Theorem~A can be deduced from Lemma~\ref{lem:dist.delta} as follows. 
Suppose that $M$ is not bounded on some compact set $K\ss\Omega$ and put 
$\rho:=\dist(K,\partial\Omega)>0$. 
Take $t\gg0$ so that $D\varphi(t)<\rho$. Let $x_t\in K$ be such that $M(x_t)>a^t$. 
Then, for some $u\in\CC_F$ we have $u(x_t)\ge a^t$, and by Lemma~\ref{lem:dist.delta} we arrive to the contradiction
$
\dist(x_t,\partial\Omega)<D\phi(t)<\rho. 
$

Let us see how Lemma~\ref{lem:dist.delta} can be used to obtain a quantitative version of Theorem~A. 
First, notice that the function $\delta\colon(\lambda+1,\infty)\to\R$ is positive, strictly decreasing and 
$\delta(t)\to\infty$ when $t\to(\lambda+1)^+$. 
On the other hand, the function $\psi\colon(\lambda+1,\infty)\to\R$ is decreasing. 
Moreover, if 
\begin{equation}\label{eq:extra.cond.F}
	m(\{x\in\Omega:F(x)>s\})>0
\end{equation}
for all $s$, 
then we can also guarantee that $\psi$ is positive. 
Consequently $\varphi\colon(\lambda+1,\infty)\to\R$ is a positive strictly decreasing function 
such that $\varphi(t)\to\infty$ when $t\to(\lambda+1)^+$ and $\varphi(t)\to0$ when $t\to\infty$. 
Also, since $\delta$ is continuous and 
$\psi$ is right-continuous 
the product $\varphi=\delta\psi$ is right-continuous.

We will use the right-continuous function $\varphi^-\colon(0,\infty)\to(\lambda+1,\infty)$ 
(see~\eqref{eq:eta-minus}).  

\begin{theorem}[A quantitative version of Theorem~A]
	\label{thm-Domar-2-quant}
Let $\Omega$ be a bounded domain in $\R^k$, $k\ge2$, and let 
$F\colon\Omega\to[0,\infty]$ be a nonnegative upper semicontinuous function such that 
$m(\{x\in\Omega:F(x)>s\})>0$ for all $s$, and 
\[
\int_\Om\big[\log^+ F(x)\big]^{k-1+\eps}\, dx <\infty
\]
for some $\epsilon>0$.  
Let $\CC_F$ be the class of all subharmonic functions $u\colon\Omega\to \R$ 
such that $u\le F$ and set 
$M(x):=\sup\{u(x):u\in\CC_F\}$. 	
Fix a constant $a>1$ and let the function $\varphi$ be as in Lemma~\ref{lem:dist.delta}. 
	Then for all $x\in\Omega$ we have 
	\[
	M(x)\le a^{\varphi^{-}(D^{-1}\dist(x,\partial\Omega))}.
	\]
\end{theorem}

\begin{proof}
	Suppose on the contrary that $M(x)>a^{\varphi^{-}(D^{-1}\dist(x,\partial\Omega))}$ 
	for some $x\in\Omega$. Then, for some $u\in\CC_F$ we have 
	$u(x)>a^{\varphi^{-}(D^{-1}\dist(x,\partial\Omega))}$. 
	Since $\varphi^{-}(D^{-1}\dist(x,\partial\Omega))>\lambda+1$, 
	by Lemma~\ref{lem:dist.delta} we arrive to a contradiction:  
	\[
	\dist(x,\partial\Omega)
	<
	D\varphi(\varphi^{-}(D^{-1}\dist(x,\partial\Omega)))
	\le
	\dist(x,\partial\Omega). 
	\qedhere
	\]
\end{proof}

\subsection{Our version of Domar's Theorem B}

\noindent
Let $F_\nu:=\{x\in\Omega: F(x)\ge a^\nu\}$
for each $\nu>0$. Choose some $\p\in(0,k)$ and put $\q=k-\p$. Notice that, to the contrary to Domar's Theorem~B, we allow non-integer values of $\q$.  
Set  
\begin{equation}\label{eqn:def-mu}
	\mu_\q(\nu):=\sup\,\frac{m(F_\nu\cap B_k(x,R))}{R^\p}, \qquad \nu>0, 
\end{equation}
where the supremum is taken over all balls $B_k(x,R)$ contained in $\Omega$. 
Notice that $\mu_\q$ is a decreasing function, that $\mu_\q(0^+)$ exists and is positive.

\begin{lemma}\label{lem:Domar.L2}
	Let $D$ be a positive constant and $\lambda$ a positive integer, both so large that 
	\begin{equation}\label{eq:D.lambda.2}
		\frac{a}{D^\q S_1}+\frac{1}{a^\lambda}\le1.
	\end{equation}
		Let $u\in\CC_F$. Suppose that for some $\nu>\lambda$ and some $x_\nu\in\Omega$ we have 
	\[
	u(x_\nu)\ge a^\nu
	\quad\tn{ and }\quad
	B_k(x_\nu,R)\ss\Omega,
	\]
	where 
	\[
	R>D[\mu_\q(\nu-\lambda)]^{1/\q}.
	\]
	Then there exists $x_{\nu+1}\in B_k(x_\nu,R)$ such that 
	\[
	u(x_{\nu+1})\ge a^{\nu+1}.
	\]
\end{lemma}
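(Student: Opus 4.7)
The proof runs parallel to Lemma~\ref{lem:Domar.L1} (Domar's Lemma~1), with $\mu_\q$ playing the role that the volumes $\ell_\nu$ played there; the only genuinely new ingredient is the use of the covering estimate encoded in the definition of $\mu_\q$.

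The plan is to argue by contradiction. Assume that $u(y)<a^{\nu+1}$ for every $y\in B_k(x_\nu,R)$. Since $B_k(x_\nu,R)\subset\Om$, the subharmonic mean value inequality applied at $x_\nu$ yields
\[
a^\nu\le u(x_\nu)\le\frac{1}{S_1R^k}\int_{B_k(x_\nu,R)} u^+(x)\,dx,
\]
where $u^+\le F$ because $F\ge 0$ and $u\le F$. Split the ball as
\[
B_k(x_\nu,R)=\bigl[B_k(x_\nu,R)\cap F_{\nu-\lambda}\bigr]\;\cup\;\bigl[B_k(x_\nu,R)\sm F_{\nu-\lambda}\bigr].
\]
On the complement of $F_{\nu-\lambda}$ one has $u^+\le F<a^{\nu-\lambda}$, while on $F_{\nu-\lambda}$ the contradiction hypothesis gives $u^+<a^{\nu+1}$.

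Combining these two bounds,
\[
a^\nu\le\frac{1}{S_1R^k}\Bigl[a^{\nu-\lambda}\,S_1R^k+a^{\nu+1}\,m\bigl(F_{\nu-\lambda}\cap B_k(x_\nu,R)\bigr)\Bigr].
\]
Dividing by $a^\nu$ and applying the definition \eqref{eqn:def-mu} of $\mu_\q$ to estimate the measure term by $\mu_\q(\nu-\lambda)R^\p$,
\[
1\le\frac{1}{a^\lambda}+\frac{a\,\mu_\q(\nu-\lambda)}{S_1R^\q}.
\]
Using the assumption $R^\q>D^\q\mu_\q(\nu-\lambda)$, the right-hand side is strictly less than $\tfrac{1}{a^\lambda}+\tfrac{a}{S_1D^\q}$, which by \eqref{eq:D.lambda.2} is at most $1$, a strict contradiction.

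The only mildly subtle point is the passage from $u$ to $u^+$ in the mean value inequality (needed because $u$ may take values in $[-\infty,\infty)$); this is legitimate since $F\ge 0$ forces $u^+\le F$, and is the standard device used in Domar's argument. Apart from this, the proof is a routine adaptation of Lemma~\ref{lem:Domar.L1}: one simply replaces the cumulative volumes of the level sets by the supremum-normalised quantity $\mu_\q$, and the exponent $k$ in $R^k$ by $\q$, which is exactly what the reweighting in the hypothesis on $D$ and $\lambda$ accommodates.
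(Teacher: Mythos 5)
Your proof is correct and follows essentially the same argument as the paper: a contradiction via the sub-mean-value inequality on $B_k(x_\nu,R)$, splitting the ball into $F_{\nu-\lambda}$ and its complement, bounding the measure term by $\mu_\q(\nu-\lambda)R^{\p}$, and invoking \eqref{eq:D.lambda.2}. The only cosmetic difference is your passage to $u^+$ (legitimate since $F\ge 0$), which the paper handles implicitly by bounding $u$ from above directly.
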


\begin{proof}
	The proof is quite similar to that of Lemma~\ref{lem:Domar.L1}. 
	If $u(x)<a^{\nu+1}$ for all $x\in B_k(x_\nu,R)$ then 
	\begin{align*}
		u(x_\nu) 
		&\le 
		\frac{1}{S_R}\int_{B_k(x_\nu,R)}u(x)\, dx\\
		&= 
		\frac{1}{S_R}\co{\int_{F_{\nu-\lambda}\cap B_k(x_\nu,R)}u(x)\, dx
			+
			\int_{B_k(x_\nu,R)\sm F_{\nu-\lambda}}u(x)\, dx}\\
		&\le 
		\frac{1}{S_R}(a^{\nu+1}\mu_\q(\nu-\lambda)R^{\p}+a^{\nu-\lambda}S_R)\\
		&= 
		a^\nu\co{\frac{a\mu_\q(\nu-\lambda)R^{\p}}{R^kS_1}+\frac{1}{a^\lambda}}\\
		&< 
		a^\nu\co{\frac{a}{D^\q S_1}+\frac{1}{a^\lambda}}\\
		&\le 
		a^\nu,
	\end{align*}
	which contradicts the hypothesis $u(x_\nu)\ge a^\nu$. 
\end{proof}

\begin{corollary}\label{cor:Domar.L2}
	Let $D$ and $\lambda$ be as in Lemma~\ref{lem:Domar.L2}. 
	Suppose that $u(x_t)\ge a^t$ for some $u\in\CC_F$ and some $x_t\in\Omega$, 
	where $t>\lambda$. Let $s$ be a real number such that $s-t$ is a positive integer and let 
	\[
	R>D\sum_{n=0}^{s-t-1}[\mu_\q(t+n-\lambda)]^{1/\q}. 
	\]
	Then either $B_k(x_t,R)$ intersects the boundary of $\Omega$ or 
	$u(x_s)\ge a^s$ for some $x_s\in B_k(x_t,R)$. 
\end{corollary}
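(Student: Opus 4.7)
The plan is to iterate Lemma~\ref{lem:Domar.L2} exactly $s-t$ times, once for each integer step from $t$ to $s$. The radii of the balls used at each step must be chosen strictly larger than the bound $D[\mu_\q(t+n-\lambda)]^{1/\q}$ required by Lemma~\ref{lem:Domar.L2}, yet the sum of all these radii must not exceed $R$, so that the entire iteration stays inside $B_k(x_t,R)$.

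Concretely, let $N:=s-t$ and set $\delta:=R-D\sum_{n=0}^{N-1}[\mu_\q(t+n-\lambda)]^{1/\q}>0$ by hypothesis. Define
\[
R_n:=D[\mu_\q(t+n-\lambda)]^{1/\q}+\frac{\delta}{2N},\qquad n=0,\dots,N-1,
\]
so that each $R_n>D[\mu_\q(t+n-\lambda)]^{1/\q}$ and $\sum_{n=0}^{N-1}R_n<R$. Assume that $B_k(x_t,R)$ does not intersect $\partial\Omega$; otherwise there is nothing to prove. Then $B_k(x_t,R)\subset\Omega$.

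Proceed by induction on $n\in\{0,1,\dots,N\}$. I claim one can produce points $x_{t+n}\in\Omega$ with $u(x_{t+n})\ge a^{t+n}$ and $|x_{t+n}-x_t|\le\sum_{j=0}^{n-1}R_j$. The base case $n=0$ is the hypothesis. For the inductive step, suppose $x_{t+n}$ has been constructed. Then
\[
B_k(x_{t+n},R_n)\subset B_k\!\Big(x_t,\,\textstyle\sum_{j=0}^{n}R_j\Big)\subset B_k(x_t,R)\subset\Omega,
\]
and since $t+n>\lambda$ and $R_n>D[\mu_\q(t+n-\lambda)]^{1/\q}$, Lemma~\ref{lem:Domar.L2} applied at the point $x_{t+n}$ with radius $R_n$ produces a point $x_{t+n+1}\in B_k(x_{t+n},R_n)$ with $u(x_{t+n+1})\ge a^{t+n+1}$. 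The distance bound $|x_{t+n+1}-x_t|\le\sum_{j=0}^{n}R_j$ follows from the triangle inequality.

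At the end of the induction, $x_s=x_{t+N}$ satisfies $u(x_s)\ge a^s$ and lies in $B_k(x_t,\sum_{j=0}^{N-1}R_j)\subset B_k(x_t,R)$, as desired. The argument is entirely mechanical; the only subtlety worth flagging is the need to enlarge each $D[\mu_\q(t+n-\lambda)]^{1/\q}$ by a positive margin (since Lemma~\ref{lem:Domar.L2} requires strict inequality) while still keeping the total sum below~$R$, which is handled by the uniform enlargement $\delta/(2N)$.
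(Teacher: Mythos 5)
Your proof is correct and is exactly the iteration the paper has in mind: the corollary is stated there as an immediate consequence of Lemma~\ref{lem:Domar.L2}, obtained by applying it $s-t$ times and summing the radii, just as you do. Your explicit choice of margins $\delta/(2N)$ to keep each step's radius strictly above the lemma's threshold while the total stays below $R$ is a clean way of making the ``immediate'' step rigorous.
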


\begin{lemma}\label{lem:dist.rho}
	Let $D$ and $\lambda$ be as in Lemma~\ref{lem:Domar.L2}. 
	Suppose that 
	$\mu_\q(\nu)$ is finite for any $\nu>0$ and 
		\begin{equation}\label{q-cond2}
		\int_0^1\mu_\q^-(s)\, ds^{1/\q}<\infty.  
	\end{equation}
	Set 
    \begin{equation}\label{def-rho}
	\rho(t):=\int_0^{\mu_\q(t-\lambda)}(\mu_\q^-(s)-t+1+\lambda)\, ds^{1/\q}, \qquad t>\lambda. 
    \end{equation}
	If for some $u\in\CC_F$, $t>\lambda$ and $x_t\in\Omega$ one has $u(x_t)\ge a^t$, 
	then 
	\[
	\dist(x_t,\partial\Omega)<D\rho(t). 
	\]
\end{lemma}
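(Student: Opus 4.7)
The plan is to iterate Corollary~\ref{cor:Domar.L2} starting from $(x_t,t)$, producing a sequence $\{x_{t+N}\}_{N\ge 1}\subset\Omega$ with $u(x_{t+N})\ge a^{t+N}$ and $x_{t+N}\in B_k(x_t,R_N)$, where the cumulative radius is
\[
R_N:=D\sum_{n=0}^{N-1}[\mu_\q(t+n-\lambda)]^{1/\q},
\]
as long as each such ball stays inside $\Omega$. Since $u$ is upper semicontinuous, it is bounded above on every compact subset of $\Omega$, so such a sequence with $u(x_{t+N})\to\infty$ cannot live in any compact subset of $\Omega$. The proof then reduces to showing that $R_N\le D\rho(t)$ uniformly in $N$, so that the assumption $\dist(x_t,\partial\Omega)\ge D\rho(t)$ keeps the entire iteration inside the compact ball $\overline{B_k(x_t,D\rho(t))}\subset\Omega$, producing a contradiction.

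The analytic heart of the proof is the identification
\[
\rho(t)=[\mu_\q(t-\lambda)]^{1/\q}+\int_{t-\lambda}^{\infty}[\mu_\q(r)]^{1/\q}\,dr,
\]
after which the required bound follows from the monotonicity of $\mu_\q$ and the elementary integral comparison $\sum_{n=0}^{\infty}[\mu_\q(t+n-\lambda)]^{1/\q}\le [\mu_\q(t-\lambda)]^{1/\q}+\int_{t-\lambda}^{\infty}[\mu_\q(r)]^{1/\q}\,dr$. I plan to obtain the identity above via Lebesgue--Stieltjes integration by parts (equivalently, the substitution $r=\mu_\q^-(s)$) applied to $\int_0^{\mu_\q(t-\lambda)}\mu_\q^-(s)\,ds^{1/\q}$: this gives $(t-\lambda)[\mu_\q(t-\lambda)]^{1/\q}+\int_{t-\lambda}^{\infty}[\mu_\q(r)]^{1/\q}\,dr$, and subtracting the piece $(t-1-\lambda)[\mu_\q(t-\lambda)]^{1/\q}$ arising from the constant term in~\eqref{def-rho} yields the formula for $\rho(t)$. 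The vanishing of the boundary term at infinity is guaranteed by the hypothesis~\eqref{q-cond2}, which forces $r[\mu_\q(r)]^{1/\q}\to 0$ as $r\to\infty$.

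With the bound $R_N\le D\rho(t)$ in hand, the contradiction argument runs as follows. Assume $\dist(x_t,\partial\Omega)\ge D\rho(t)$. For each $N$ one can select $R\in(R_N,\dist(x_t,\partial\Omega)]$ (the interval is nonempty since the tail $\sum_{n\ge N}[\mu_\q(t+n-\lambda)]^{1/\q}>0$ in the nondegenerate case; if instead $\mu_\q(\nu_0)=0$ for some $\nu_0$, then $u\le a^{\nu_0}$ on all of $\Omega$, which terminates the iteration prematurely and already gives the desired conclusion). Then $B_k(x_t,R)\subset\Omega$, and Corollary~\ref{cor:Domar.L2} supplies $x_{t+N}\in B_k(x_t,R)$ with $u(x_{t+N})\ge a^{t+N}$. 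The resulting unbounded sequence $\{u(x_{t+N})\}$ lies in the compact set $\overline{B_k(x_t,D\rho(t))}\subset\Omega$, contradicting upper semicontinuity of $u$.

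The main obstacle is the integration-by-parts identity rewriting $\rho(t)$ in terms of $\int_{t-\lambda}^{\infty}[\mu_\q(r)]^{1/\q}\,dr$, which must be handled in the Lebesgue--Stieltjes sense since $\mu_\q$ is neither continuous nor strictly decreasing a priori; in particular, the right-continuity of $\mu_\q^-$ and the extraction of the boundary term require care. A secondary technical point is the handling of the strict inequality in the conclusion $\dist(x_t,\partial\Omega)<D\rho(t)$, which, if needed, can be arranged by slightly enlarging $D$ within the range allowed by~\eqref{eq:D.lambda.2}.
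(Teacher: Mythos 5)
Your strategy is the same as the paper's: iterate Corollary~\ref{cor:Domar.L2}, use that $u$ is bounded above on compact subsets of $\Omega$, and bound the accumulated radius $D\sum_{n\ge0}[\mu_\q(t-\lambda+n)]^{1/\q}$ by $D\rho(t)$. Your route to that last bound is a legitimate variant: the identity $\rho(t)=[\mu_\q(t-\lambda)]^{1/\q}+\int_{t-\lambda}^{\infty}[\mu_\q(r)]^{1/\q}\,dr$ is correct in full generality (a Fubini/Stieltjes computation, and \eqref{q-cond2} does force $r[\mu_\q(r)]^{1/\q}\to0$, so the boundary term vanishes), after which the series--integral comparison follows from monotonicity. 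The paper instead compares the series with \eqref{def-rho} directly, via the discrete rearrangement $\sum_n[\mu_\q(t-\lambda+n)]^{1/\q}=\sum_n\int_{\mu_\q(t-\lambda+n+1)}^{\mu_\q(t-\lambda+n)}(n+1)\,ds^{1/\q}$ and the pointwise bound $n+1\le\mu_\q^-(s)-t+1+\lambda$; the two computations are equivalent in content.

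The defects are in how you run the contradiction. Having assumed $\dist(x_t,\partial\Omega)\ge D\rho(t)$ and chosen $R\in(R_N,\dist(x_t,\partial\Omega)]$, the points $x_{t+N}$ you produce lie only in $B_k(x_t,R)$, not in $\overline{B_k(x_t,D\rho(t))}$; moreover, under your hypothesis that closed ball may touch $\partial\Omega$ (when equality holds), so it need not be a compact subset of $\Omega$, and the contradiction with local boundedness does not follow as written. The clean repair is the paper's organization: prove first that $\dist(x_t,\partial\Omega)\le D\sum_{n\ge0}[\mu_\q(t-\lambda+n)]^{1/\q}$ by fixing a single $R$ strictly between this full sum and the distance (if the distance were larger), so that every iterate stays in $\overline{B_k(x_t,R)}$, which genuinely is compact in $\Omega$. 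This organization also removes your need for strictly positive tails, and with it the degenerate case: your patch for $\mu_\q(\nu_0)=0$ --- that $u\le a^{\nu_0}$ ``already gives the desired conclusion'' --- is not justified, since a global upper bound on $u$ says nothing by itself about $\dist(x_t,\partial\Omega)$ versus $D\rho(t)$; in the paper's formulation no separate case is needed, because when no further point exists Corollary~\ref{cor:Domar.L2} forces $B_k(x_t,R)$ to meet $\partial\Omega$, which yields the same distance bound. Finally, the strict inequality: enlarging $D$ only weakens the statement and does not give $\dist(x_t,\partial\Omega)<D\rho(t)$ for the given $D$ (while a smaller $D$ may violate \eqref{eq:D.lambda.2}); in the paper the strictness is extracted from the comparison between the series and $\rho(t)$ itself, not from perturbing the constants.
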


\begin{proof}
	Since $u$ is bounded from above on compact subsets of $\Omega$, 
	by Corollary~\ref{cor:Domar.L2} we have 
	\begin{align*}
		\dist(x_t,\partial\Omega) 
		&\le 
		D\sum_{n=0}^{\infty}[\mu_\q(t-\lambda+n)]^{1/\q}\\
		&= 
		D\sum_{n=0}^{\infty}\int_0^{\mu_\q(t-\lambda+n)}ds^{1/\q}\\
		&= 
		D\sum_{n=0}^{\infty}\int_{\mu_\q(t-\lambda+n+1)}^{\mu_\q(t-\lambda+n)}(n+1)\, ds^{1/\q}\\
		&<  
		D\sum_{n=0}^{\infty}\int_{\mu_\q(t-\lambda+n+1)}^{\mu_\q(t-\lambda+n)}
		\big(\mu_\q^-(s)-t+1+\lambda\big)\, ds^{1/\q} \\
		&= 
		D\int_0^{\mu_\q(t-\lambda)}
		\big(\mu_\q^-(s)-t+1+\lambda\big)\, ds^{1/\q} \\
		&= 
		D\rho(t).
		\qedhere
	\end{align*}
\end{proof}

Notice that the function $\rho$ is decreasing. 

\begin{theorem}[A quantitative version of Theorem~B]
	\label{thm-Domar-3-quant} 
Let $\Omega$ be a bounded domain in $\R^k$, $k\ge2$, and let 
$F\colon\Omega\to[0,\infty]$ be a nonnegative upper semicontinuous function. 
Fix a constant $a>1$ and let $F_\nu:=\{x\in\Omega: F(x)\ge a^\nu\}$
for each $\nu>0$. Choose some $\p\in(0,k)$ and put $\q=k-\p$. 
Set  
\[
	\mu_\q(\nu):=\sup\,\frac{m(F_\nu\cap B_k(x,R))}{R^\p}, \qquad \nu>0, 
\]
where the supremum is taken over all balls $B_k(x,R)$ contained in $\Omega$. 
Suppose that $\mu_\q(\nu)$ is finite for any $\nu>0$ and 
\[
\int_0^1\mu_\q^-(s)\, ds^{1/\q}<\infty 
\]
(see~\eqref{eq:eta-minus}). 
Define the function $\rho$ as in Lemma~\ref{lem:dist.rho}. 
	Then for all $x\in\Omega$ we have 
	\[
	M(x)\le a^{\rho^{-}(D^{-1}\dist(x,\partial\Omega))}.
	\]
\end{theorem}

We remark that condition~\eqref{q-cond} of Theorem B implies condition~\eqref{q-cond2}.

\begin{proof}[Proof of Theorem~\ref{thm-Domar-3-quant}]
	Suppose on the contrary that 
	\[
	M(x)> a^{\rho^{-}(D^{-1}\dist(x,\partial\Omega))}
	\] 
	for some $x\in\Omega$. Then there are some $u\in\CC_F$ and $\kappa_0\in\R$ such that 
	\[
	u(x)>a^ {\kappa_0}>a^{\rho^{-}(D^{-1}\dist(x,\partial\Omega))}. 
	\] 
	Since $\rho^{-}(D^{-1}\dist(x,\partial\Omega))>\lambda$, 
	by Lemma~\ref{lem:dist.rho} we arrive to a contradiction:  
	\[
	\dist(x,\partial\Omega)<D\rho(\kappa_0)\le D\rho(\rho^{-}(D^{-1}\dist(x,\partial\Omega)))
	\le
	\dist(x,\partial\Omega). \qedhere
	\]
\end{proof}

\begin{remark}\label{rem:mu_\q}
	Theorem~\ref{thm-Domar-3-quant} 
	is still true if, instead of defining the function $\mu_\q$ by 
	\eqref{eqn:def-mu}, we just require that 
	\[
	\sup_{B_k(x,R)\subset \Om}\frac{m(F_\nu\cap B_k(x,R))}{R^\p}\le \mu_\q(\nu) 
	\]
	for all $\nu>0$. This can be seen easily from the proof of this theorem. This observation will be useful for application of this result.  
\end{remark}

\section{Application of quantitative Domar's results to our problem. Proof of Theorem~\ref{thm-Domar-s^-b}}
\label{Sec:comparison}

Here we show how to apply Theorem~\ref{thm-Domar-3-quant} 
to obtain an answer to our main question for the case when $A$ is a $\p$-admissible set. We are going to prove the following. 
\begin{theorem}\label{thm-Domar-A-B}
	Let $0<\p<k$ and let $\Om$ be a bounded open subset of $\R^k$. 
	Suppose $g$ is continuously differentiable. 
	Fix $a>1$. 
	Choose $D>1$ so that this value of $D$ and $\lambda=1$ satisfy \eqref{eq:D.lambda.2}. 
Suppose that $B\subset \R^k$ is closed and $A\subset \Om$ is a compact $\p$-admissible set such that $m(A)=0$.  
Let $C_1=\max(C,1)$, where $C$ is the constant in the condition~\eqref{eq:p-admiss}
for $\p$-admissibility. Put
\[
\mu_{ad}(\nu):= 
C_1\big(g^{-1}(a^\nu)\big)^\q
\]
and let 
\begin{equation}\label{eq-expr-mu-}
	\mu_{ad}^-(t)=\log_a g\Big(\Big(\frac t {C_1}\Big)^{\oneq}\Big) 
\end{equation}
be its inverse function. Assume that $\mu_{ad}^-$ satisfies~\eqref{q-cond2}. Put 
\begin{equation}\label{eq-def-rho-ad}
	\rho_{ad}(t)=\int_0^{\mu_{ad}(t-1)}(\mu_{ad}^-(s)-t+2)\, ds^{1/\q}, \qquad t>1. 
\end{equation}
Set $\tau=\dist(A,\partial\Om)/2>0$. 
Then for any subharmonic function $u$ on $\Om\sm B$ satisfying~\eqref{eq:estim-u1}, one has 
\begin{equation}
	\label{estim-cg-}
	u(x)\le a^{\rho_{ad}^{-}\big(\dist(x,B)/(3D)\big)}, 
	\quad x\in \Om\sm B, \dist(x,B)<\tau.  
\end{equation}
\end{theorem}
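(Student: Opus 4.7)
The plan is to apply Theorem~\ref{thm-Domar-3-quant} to $u$ on a ball $\Omega_0 := B_k(x_0, R)$ centered at each point $x_0 \in \Om \sm B$ close to $B$, with $R = \dist(x_0, B)/3$, and to bound the super-level-set density $\mu_\q$ by $\mu_{ad}$ via the $\p$-admissibility of $A$. Fix $x_0 \in \Om \sm B$ with $d := \dist(x_0, B) < \tau$. Disjointness $\Omega_0 \cap B = \emptyset$ is automatic since $R < d$. For $\Omega_0 \ss \Om$ I would split cases. If $\dist(x_0, A) \ge \tau$, then $\dist(x_0, A \cup B) = d$ and the hypothesis~\eqref{eq:estim-u1} already gives $u(x_0) \le g(d)$; a short computation using~\eqref{eq-expr-mu-} (substituting $u = (s/C_1)^{1/\q}$ in the integral defining $\rho_{ad}$) shows $\rho_{ad}(\log_a g(d)) \ge C_1^{1/\q} d \ge d/(3D)$, so $\rho_{ad}^-(d/(3D)) \ge \log_a g(d)$ and the claim follows. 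Otherwise $\dist(x_0, A) < \tau$, and since $\dist(A, \partial\Om) = 2\tau$, the triangle inequality gives $\dist(x_0, \partial\Om) \ge \tau > R$, hence $\Omega_0 \ss \Om$ and $u$ is subharmonic on $\Omega_0$.

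On $\Omega_0$ every point $x$ satisfies $\dist(x, B) \ge 2d/3$, so the hypothesis gives
\[
u(x) \le g(\dist(x, A \cup B)) \le \max\{g(\dist(x, A)),\, g(2d/3)\} =: F(x);
\]
extending $F = +\infty$ on $A \cap \Omega_0$ yields a nonnegative upper semicontinuous envelope with $u \in \CC_F$. For $\nu$ with $a^\nu > g(2d/3)$ one has $\{F \ge a^\nu\} = \{\dist(\cdot, A) \le g^{-1}(a^\nu)\}$, which, modulo the null set $A$, equals $[A]_{g^{-1}(a^\nu)}$; hence~\eqref{eq:p-admiss} gives
\[
m(\{F \ge a^\nu\} \cap B_k(y, R')) \le C\,(g^{-1}(a^\nu))^\q (R')^\p \le \mu_{ad}(\nu)(R')^\p
\]
for every ball $B_k(y, R') \ss \Omega_0$. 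By Remark~\ref{rem:mu_\q} one may therefore invoke Theorem~\ref{thm-Domar-3-quant} on $\Omega_0$ with $\mu_{ad}$ replacing $\mu_\q$, $\lambda = 1$, and the chosen $D$, to conclude
\[
u(x_0) \le M(x_0) \le a^{\rho_{ad}^-(\dist(x_0, \partial\Omega_0)/D)} = a^{\rho_{ad}^-(R/D)} = a^{\rho_{ad}^-(\dist(x_0, B)/(3D))},
\]
which is~\eqref{estim-cg-}.

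The main obstacle is the low-$\nu$ regime: when $\nu \le \log_a g(2d/3)$, the set $\{F \ge a^\nu\}$ fills $\Omega_0$ and $\p$-admissibility does not directly give $\mu_\q(\nu) \le \mu_{ad}(\nu)$. I would resolve this by inspecting the proof of Lemma~\ref{lem:dist.rho}: its iteration only invokes $\mu_\q(\nu)$ for $\nu \ge t - \lambda$, where $a^t$ is the hypothetical value of $u(x_0)$ to be excluded. Taking $t := \rho_{ad}^-(d/(3D))$ keeps the iteration in the favourable range $\nu > \log_a g(2d/3)$, while in the complementary range $t \le 1 + \log_a g(2d/3)$ the target bound $a^{\rho_{ad}^-(d/(3D))}$ is itself at most $a\,g(2d/3)$, which is consistent with the direct hypothesis and can be verified by the same sort of comparison used in the first step.
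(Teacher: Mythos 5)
Your overall strategy is the same as the paper's: localize to the ball $B_k(x_0,\dist(x_0,B)/3)$, majorize $u$ there by a function whose super-level sets are controlled through $\p$-admissibility, invoke Theorem~\ref{thm-Domar-3-quant} via Remark~\ref{rem:mu_\q} with $\la=1$, and handle the ``far from $A$'' case directly from the lower bound $\rho_{ad}^-(t)\ge 1+\log_a g(t)$ (your Case-1 computation is essentially the paper's \eqref{estim-g-rho-}). The difference, and the source of the trouble you yourself flag, is the case split. The paper splits on $\dist(x,A)\ge \dist(x,B)/3$ versus $\dist(x,A)<\dist(x,B)/3$; in the second case every $y$ in the ball $B(x,d)$, $d=\dist(x,B)/3$, satisfies $\dist(y,A)\le 2d\le \dist(y,B)$, so $\dist(y,A\cup B)=\dist(y,A)$ and one can take $F=g(\dist(\cdot,A))$ outright. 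Then $F_\nu$ is a $\sigma(\nu)$-neighbourhood of $A$ for \emph{every} level $\nu>0$, the hypothesis of Remark~\ref{rem:mu_\q} holds at all levels, and Theorem~\ref{thm-Domar-3-quant} applies as a black box. Your split on $\dist(x_0,A)\ge\tau$ forces the majorant $F=\max\{g(\dist(\cdot,A)),g(2d/3)\}$, whose low super-level sets fill the ball, and you then have to reopen the proof of Lemma~\ref{lem:dist.rho} to argue that only levels $\nu\ge t-\la$ are used.

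That repair can be made to work, but not as you wrote it. Your treatment of the ``complementary range'' $t\le 1+\log_a g(2d/3)$ is not a proof: there you only observe that the target $a^{\rho_{ad}^-(d/(3D))}$ is at most $a\,g(2d/3)$, but the hypothesis~\eqref{eq:estim-u1} gives no bound of that size at points $x_0$ close to $A$ (where $g(\dist(x_0,A\cup B))$ is huge), so ``consistent with the direct hypothesis'' does not yield $u(x_0)\le a^{\rho_{ad}^-(d/(3D))}$. What saves you is that this range is in fact empty: by \eqref{estim-g-rho-} and the monotonicity of $g$, $\rho_{ad}^-(d/(3D))\ge 1+\log_a g(d/(3D))> 1+\log_a g(2d/3)$ since $D>1$, so in the contradiction argument every level $\nu\ge \kappa_0-1>\rho_{ad}^-(d/(3D))-1$ satisfies $a^\nu>g(2d/3)$ and admissibility applies; you need to state and prove this, and also to write out the localized variant of Lemma~\ref{lem:dist.rho}/Theorem~\ref{thm-Domar-3-quant} you are invoking, since the theorem as stated (even with Remark~\ref{rem:mu_\q}) requires the bound $\mu_\q(\nu)\le\mu_{ad}(\nu)$ for all $\nu>0$. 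Alternatively, and more simply, adopt the paper's case split, after which no modification of the quantitative Domar theorem is needed.
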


Notice that~\eqref{eq-def-rho-ad} is just the formula~\eqref{def-rho}, applied to $\mu_{ad}$ in place of $\mu_\q$ and to $\la=1$. 

Let us relate the notion of a $p_*$-admissible set
with the Assouad dimension. 
If $H\subset \R^k$ 
is a bounded set and $r>0$, we denote by $N_r(H)$ the smallest number of open balls of radius $r$ whose union contains $H$.

\begin{definition}
	Let $A$ be a bounded non-empty subset of $\R^k$. 
	Given $p\in (0,k)$ and $C>0$, consider the condition 
	\begin{enumerate}
		\item[$(\mathcal{A}(p,C))$] \qquad 
		for all $0<r<R$ and $x\in A$, 
		$\displaystyle N_r(B(x,R)\cap A)\le C\Big(\frac R r\Big)^p$. 	
	\end{enumerate}
	The {\it Assouad dimension of} $A$ is defined as 
	\[
	\dim_{\text{As}} A = \inf 
	   \Big\{ p: \text{there exists $C>0$ such that $(\mathcal{A}(p,C))$ holds} \Big\}.     
     \]
\end{definition}

We refer to the book by Fraser~\cite{Fraser-book} for a comprehensive account of this notion.

\begin{proposition}\label{prop:Assouad-dim}
Let $\p<k$. If the Assouad dimension of $A$ is less than $\p$, then $A$ is $\p$-admissible. 
\end{proposition}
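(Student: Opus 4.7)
The plan is to use the strict inequality $\dim_{\tn{As}} A < \p$ to pick an intermediate exponent. By definition of Assouad dimension, there exist $p\in\big(\dim_{\tn{As}}A,\,\p\big)$ and $C_0>0$ such that $(\mathcal A(p,C_0))$ holds. Set $q:=k-p$ (so $q>\q$). The goal is to bound $m\bigl([A]_\si\cap B(x,R)\bigr)\le C\si^{\q}R^{\p}$ for every $x\in\R^k$ and every $R,\si>0$. I would split into two regimes according to whether $\si$ dominates $R$ or not.

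In the easy regime $\si\ge R$, the trivial bound $m\bigl([A]_\si\cap B(x,R)\bigr)\le m\bigl(B(x,R)\bigr)=\omega_k R^k$ suffices, because $R^k=R^\p R^\q\le R^\p\si^\q$. In the regime $\si<R$, I would first dispose of the case $A\cap B(x,2R)=\emptyset$: then any $y\in B(x,R)$ satisfies $\dist(y,A)>R>\si$, so $[A]_\si\cap B(x,R)$ is empty. Otherwise pick $x_0\in A\cap B(x,2R)$. Then $A\cap B(x,2R)\subset A\cap B(x_0,4R)$, and by $(\mathcal A(p,C_0))$ applied with the big radius $4R$ and the small radius $\si$, the set $A\cap B(x_0,4R)$ can be covered by $N\le C_0(4R/\si)^p$ balls of radius $\si$, say $B(a_i,\si)$. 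For $y\in[A]_\si\cap B(x,R)$, pick $a\in A$ with $|y-a|\le\si$; then $a\in B(x,2R)\subset B(x_0,4R)$, so $a\in B(a_i,\si)$ for some $i$, whence $|y-a_i|\le 2\si$. Therefore $[A]_\si\cap B(x,R)\subset\bigcup_i B(a_i,2\si)$, and summing volumes yields
\[
m\bigl([A]_\si\cap B(x,R)\bigr)\le N\cdot\omega_k(2\si)^k\le 2^k 4^p C_0\,\omega_k\,R^p\si^{k-p}.
\]

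The last step, which I view as the \emph{main (mildly delicate) point}, is to convert the exponent $\si^{k-p}R^p$ into the required $\si^\q R^\p$. Since $\p-p>0$ and $\si<R$, one has $\si^{\p-p}\le R^{\p-p}$, hence
\[
R^p\si^{k-p}=R^p\si^{\q}\si^{\p-p}\le R^\p\si^\q.
\]
This is exactly the place where the strict inequality $\dim_{\tn{As}}A<\p$ is used: the slack $\p-p>0$ absorbs the excess power of $\si$. Combining the two regimes gives the constant $C=\max\bigl(\omega_k,\,2^k 4^p C_0\,\omega_k\bigr)$, proving that $A$ is $\p$-admissible. No further obstacles are expected, as compactness of $A$ plays no explicit role beyond ensuring the Assouad hypothesis is used in its standard bounded form.
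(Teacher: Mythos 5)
Your proof is correct and follows essentially the same route as the paper: cover $A$ near the ball by roughly $C(R/\si)^{p}$ balls of radius $\si$ via the Assouad condition, enlarge them to radius $2\si$ to capture $[A]_\si$, and sum volumes; the only cosmetic difference is that you spend the slack $p<\p$ at the end ($\si^{\p-p}\le R^{\p-p}$), while the paper uses that $(\mathcal A(p,C))$ with $p<\p$ already implies $(\mathcal A(\p,C))$ since $R/\si>1$. Your explicit treatment of the cases $\si\ge R$ and $A\cap B(x,2R)=\emptyset$, and the passage to $B(x_0,4R)$ to catch points of $A$ just outside $B(x,R)$, are details the paper glosses over, and they are handled correctly.
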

\begin{proof} 
By the assumption, $(\mathcal{A}(p_*,C))$ holds for some constant $C>0$. Hence for all $0<\si<R$, $B(x,R)\cap A$ can be covered by less than $C(R/\si)^\p$ balls of radius $\si$. It follows that $[A]_\si$ is covered by the balls with the same centers of radia $2\si$. This implies~\eqref{eq:p-admiss}, which shows that $A$ is $\p$-admissible. 
\end{proof}	

There are many examples of sets $A\subset \R^k$ of non-integer Assouad dimension, in particular, self-similar sets that are attractors of iterated function systems, which satisfy the so-called Open Set Condition. See \cite{Fraser-book}, Theorems 6.4.1, 6.4.3 and Corollary 6.4.4 and Falconer's book \cite{Falconer-book}, Chapter 9 (in particular, Theorem 9.3). There are cases when $\dim_{\text{As}} A$ is strictly between $k-1$ and $k$.

\begin{proof}[Proof of Theorem \ref{thm-Domar-A-B}]
Integrating by parts, we get 
\begin{equation}
	\label{eq:expression-rho}
	\begin{aligned}
		\rho_{ad}(\nu)&=\int_0^{\mu_{ad}(\nu-1)}
		\big(
		\mu_{ad}^-(x)+2-\nu
		\big)\, dx^{\oneq} \\
		& = \mu_{ad}^-(x) x^{\oneq}\Big|_{x=0}^{x=\mu_{ad}(\nu-1)} 
		 - \int_0^{\mu_{ad}(\nu-1)} (\mu_{ad}^-)'(x)x^{\oneq}\, dx 
		+\big(2-\nu\big)\mu_{ad}(\nu-1)^{\oneq} \\
		& = (\nu-1)\mu_{ad}(\nu-1)^{\oneq}
		+ (2-\nu)\mu_{ad}(\nu-1)^{\oneq} 
		- \int_0^{\mu_{ad}(\nu-1)} (\mu_{ad}^-)'(x)x^{\oneq}\, dx \\
		& = \mu_{ad}(\nu-1)^{\oneq}   
		- \int_0^{\mu_{ad}(\nu-1)} (\mu_{ad}^-)'(x)x^{\oneq}\, dx\, .  
	\end{aligned}
\end{equation} 
Since $(\mu_{ad}^-)'\le 0$, it follows that 
	$\rho_{ad}(\nu)\ge \mu_{ad}(\nu-1)^{\oneq}$.
Hence, by~\eqref{eq-expr-mu-}, 
\begin{equation}\label{estim-g-rho-}
\rho_{ad}^-(t)\ge 1+\log_a g(C_1^{-\oneq}t)\ge 1+\log_a g(t). 
\end{equation}
	
	Suppose $u$ satisfies \eqref{eq:estim-u1}, and fix a point $x\in \Om\sm A$ such that $0<\dist(x,B)<\tau$. 
	Consider two cases. 

\smallskip 

	{\noindent\scshape Case 1:} $\dist(x,A)\ge \dist(x,B)/3$. Then, by 
	~\eqref{estim-g-rho-}, 
		\begin{align*}
		u(x) 
		&\le 
		g(\dist(x,A\cup B))
		\le 
		g(\dist(x, B)/3)\\ 
		&\le 
		a^{-1+\rho^-_{ad}(\dist(x, B)/3)}\le 
		a^{\rho^-_{ad}(\dist(x, B)/(3D))}, 
		\end{align*} 			 
which gives \eqref{estim-cg-} in this case. 	 
	
\smallskip 
	
	{\noindent\scshape Case 2:} $\dist(x,A)< \dist(x,B)/3<\tau$. Put $d=\dist(x,B)/3$. Then 
	\[	
     \dist(x,\partial\Om)\ge \dist(A,\partial\Om)-\dist(x,A)= 2\tau-\dist(x,A)>\tau>d. 
     \]
    Hence $B(x,d)\subset \Om$. 
    We apply the estimate of Theorem~\ref{thm-Domar-3-quant} to the point $x$ and the open ball 
	$B(x,d)$. Notice that 
	for any $y\in B(x,d)$, 
	\[
	\dist(y,A)\le d+\dist(x,A)\le 2d=
	\dist(x,B)-d\le \dist(y,B). 
	\]
	Hence we have $u\le F$ in $B(x,d)$, where 
		the majorant $F$ has the form $F=g\big(\dist(\cdot,A)\big)$. 
	We put $F(y)=+\infty$ if $y\in A$. This function is upper semicontinuous on $B(x,d)$. We have 
    \[
    F_\nu=\{y\in B(x,d): \; F(x)\ge a^\nu\}
      = \{y\in B(x,d): \; \dist(y,A)\le\s(\nu)\}, 
    \]
    where we have denoted $\s(\nu)=g^{-1}(a^\nu)$. 
    Since $A$ is $\p$-admissible,  
     \[
	 \sup_{B(y,R)\subset B(x,d)}\frac{m\big(F_\nu\cap B_k(x,R)\big)}{R^\p}
     \le  C\s(\nu)^\q= C\big(g^{-1}(a^\nu)\big)^\q\le \mu_{ad}(\nu). 
     \]
     By Theorem~\ref{thm-Domar-3-quant}, 
     applied to the ball $B_k(x,d)$ and its center 
     (see also Remark~\ref{rem:mu_\q}), 
\[
	u(x)\le a^{\rho_{ad}^{-}(d/D)} 
= a^{\rho_{ad}^{-}(\dist(x,B)/(3D))}, 
	\quad x\in \Om\sm B, \dist(x,B)<\tau.  \qedhere
\]
\end{proof}

\begin{proof}[Proof of Theorem \ref{thm-Domar-s^-b}]
We will use~\eqref{eq:expression-rho}. 
First we observe that the limit  
\begin{equation}
	\label{eq:asymp-int}
L:=\lim_{t\to 0^+}
\frac{\int_0^t (\mu_{ad}^-)'(x)x^{\oneq}\, dx}
  {t^{\oneq}} 
\end{equation}
exists and satisfies $-\infty<L\le 0$.     
Indeed, by the L'Hospital rule,  
    \begin{equation}
    	\label{eq:L}
    L=\q \lim_{t\to 0^+}\frac{(\mu_{ad}^-)'(t)t^{\oneq}}{t^{\oneq-1}}=
\q \lim_{t\to 0^+}t (\mu_{ad}^-)'(t). 
\end{equation}
Since $g(1/s)$, $(d/ds)g(1/s)$ and all power functions 
$s^b$ belong to $\frR$, it is easy to get from 
\eqref{eq-expr-mu-} that the last limit exists, but might be infinite.  
Since $(\mu_{ad}^-)'(t)\le 0$, we have 
$-\infty\le L\le 0$. It remains to see that $L\ne -\infty$. 

Indeed, if $\lim_{t\to 0^+}t (\mu_{ad}^-)'(t)=-\infty$, 
then for any $\gamma>0$ there would exist some $t_0=t_0(\gamma)>0$ such that $(\mu_{ad}^-)'(t)<-\gamma/t$ for $0<t\le t_0$. By integrating between $t$ and $t_0$, we would get that for any $\gamma>0$, $a^{\mu_{ad}^-(t)}>K(\gamma) t^{-\gamma}>0$ 
for $0<t<t_0(\gamma)$. This, together with the growth assumption on $g$, contradicts~\eqref{eq-expr-mu-}.

Notice that $\mu_{ad}(\nu-1)\to 0$ as $\nu\to\infty$. Since $L>-\infty$,~\eqref{eq:expression-rho} implies that  
\[
\rho_{ad}(\nu)\le C_2 \mu_{ad}(\nu-1)^{\oneq}, 
\]
say, for $\nu\ge \nu_0$ (in particular,~\eqref{q-cond2} holds). 
By~\eqref{eq-expr-mu-}, it follows that 
\[
\rho_{ad}^-(t)\le 1 +  \mu_{ad}^-(C_2^{-\q}t^\q)
\le 1 + \log_a g(v_0 t),   
\]
say, for $0<t\le t_0$, where $t_0>0$ and $v_0>0$ are  constants. Now we apply Theorem~\ref{thm-Domar-A-B} and obtain an estimate
\begin{equation}
	\label{eq:est-M(x)}
u(x)\le 
a^{ \rho_{ad}^-(\dist(x, B)/(3D))}
 \le a  g\big(v\dist(x, B)\big), 
 \quad 
 \dist(x,B)\le \tau, 
\end{equation}
where $v=v_0/(3D)$ and $\tau>0$ is a constant. By substituting $v$ with a smaller positive constant, we get that~\eqref{eq:est-M(x)} holds for any $x\in\Om\sm B$. Hence~\eqref{estim-cg} is valid. 
\end{proof}

\begin{remark}\label{rem:worse-s^-b}
	If $g(t)t^\beta\to +\infty$ as $t\to0^+$ for any $\beta>0$, but the 
	rest of assumptions of Theorem~\ref{thm-Domar-s^-b} hold, 
	then the quantitative Domar's Theorem~\ref{thm-Domar-3-quant}
    does not give the estimate $u(x)\le g(v\dist(x,B))$, where $v>0$ is a constant.  
    
    Indeed, 
	the arguments of the above proof can be reversed, and one gets that in this case, $L=-\infty$. Hence now~\eqref{eq:expression-rho} gives that for any $C_1>0$ there exists $\nu_0$ such that 
	$\rho(\nu)\ge C_1 \mu_\q(\nu-1)^{\oneq}$ for $\nu\ge \nu_0$. This in its term implies that for any $v_0>0$ there exists $t_0>0$ such that 
	\[
	\rho^-(t)\ge 1 +\log_a g(v_0t), \quad 0<t\le t_0. 
	\] 
	Hence for any $v>0$ there exists $C>0$ such that the majorant provided by Theorem~\ref{thm-Domar-3-quant} satisfies  
	\[
	a^{\rho^-(\dist(x,B)/D)}\ge C g(v\dist(x,B)), 
	\]
    whenever $\dist(x,B)$ is sufficiently small. 
    Since $g(t)$ grows more rapidly than any power $t^{-\beta}$, it is easy to get that for any $v_1, v_2$
    such that $0<v_1<v_2$, $\limsup_{t\to0^+} g(v_1t)/g(v_2t)= +\infty$. This implies our  assertion.  		
\end{remark}

\begin{remark}\label{rem:about-Thm-5.4}
Theorem~\ref{thm-Domar-A-B} has been obtained by applying 
Theorem~\ref{thm-Domar-3-quant} to our main problem. Instead one could try to apply Theorem~\ref{thm-Domar-2-quant}. 
It looks like, however, that this gives worse estimates. For instance, under the hypotheses of Theorem~\ref{thm-Domar-s^-b}, by applying Theorem~\ref{thm-Domar-2-quant} one only gets that 
\[
u(x)\le \big(\dist(x,A\cup B)\big)^{-\beta}, \; x\in\Om\sm (A\cup B)\;
\]
implies
\[
u(x)\le \eta(\dist(x,B)), \; x\in\Om\sm B, 
\]
where $\eta(y)=\eta_1(y)y^{-\frac{k\beta}\q}$ 
and $\lim_{0^+}\eta_1=+\infty$. 
This is worse than the estimate given by 
Theorem~\ref{thm-Domar-s^-b} (which is a consequence of Theorem~\ref{thm-Domar-A-B}). 
\end{remark}

As the next example shows, if the growth of $g$ is close to the limit one permitted in Domar's estimates, Theorem~\ref{thm-Domar-3-quant}
in fact gives a much worse estimate than $g(v\dist(x,B))$. 

\begin{example}\label{ex:fast-growth-g}
	Consider the case of the function $g(t)=\exp(t^{-\alpha})$, where $\al>0$. It grows at $0^+$ faster than any power $t^{-\beta}$. Assume that $A$ is $\p$-admissible for some $\p<k$. Then we get 
	\[
	\si(\nu)=\nu^{-\frac 1 \al},
	\quad 
	\mu_{ad}(\nu)=C'\nu^{-\frac \q \al}, 
	\quad 
    \mu_{ad}^-(x)=Cx^{-\frac \al \q}
	\] 
	(where $C', C>0$ are some constants). 
	To meet the condition~\eqref{q-cond2}, 
	we have to impose the condition $\al<1$. 

	An easy calculation using~\eqref{eq:expression-rho} yields that $\rho_{ad}^-(t)\sim C_2 t^{-\frac \al{1-\al}}$ as $t\to 0^+$, where 
	$C_2>0$ is a constant. 
    We get that the majorant from Theorem~\ref{thm-Domar-3-quant} has the form 
    \[
    a^{\rho^-(\dist(x,B)/(3D))}
       =a^{W(\dist(x,B)) \dist(x,B)^{-\frac \al{1-\al}}}, 
    \]	
    where $\lim_{t\to 0^+} W(t)$ is finite and positive. For any constant $v>0$, 
    this majorant grows much faster near the set $B$ than the function $g(v\dist(\cdot, B))$. If $\al$ is close to $1$ (that is, the growth of $g$ is close to the limit one), then the exponent $-\frac \al{1-\al}$ in the above expression is close to $-\infty$.      
\end{example}
We end the article with the following open questions. 
\begin{question1}
Can the conclusion~\eqref{estim-cg} of Theorem ~\ref{thm-Domar-s^-b}
be substituted by an estimate 
\[
u(x)\le g(v\dist(x,B)), \quad x\in\Om\sm B, 
\]
where $v\in(0,1)$ is a constant? 
\end{question1}

By Corollary~\ref{cor:union-Lip-surf}, it is true for finite unions of closed subsets of Lipschitz curves. 

\begin{question2}
	Are the estimates in quantitative Domar's Theorems~\ref{thm-Domar-2-quant} and \ref{thm-Domar-3-quant} optimal in some sense? 
\end{question2}

\begin{question3}
	Is the estimate in Theorem~\ref{thm-Domar-A-B} optimal? 
\end{question3}

\begin{question4}
	In particular, does there exist a regular function $g(t)$, whose singularity at $0^+$ is stronger than any power $t^{-b}$, such that, 
	for a class of sets $A$,  
	\eqref{eq:estim-u1} implies an estimate $u(x)\le g(v \dist(x,B))$, where $v\in (0,1)$ is a constant? 
\end{question4}

\begin{question5}
	Suppose that $k=2$, and identify $\R^2$ with the complex plane. Are the estimates for subharmonic functions of the form $u(z)=\log |f(z)|$, where $f$ is analytic on $\Om\sm B$, better than those for general subharmonic functions? 
\end{question5}

 \section{Appendix}
 \label{Sec:Appendix}

\begin{proof}[Proof of Lemma~\ref{lem:dist.delta}]
Since $u$ is bounded on compact subsets of $\Omega$, 
by Corollary~\ref{cor:Domar.L1} we have 
\[
\dist(x_t,\partial\Omega) 
\le 
D\sum_{n=0}^{\infty}(\ell_{t+n-\lambda}+\dotsb+\ell_{t+n})^{1/k}.
\]
Clearly 
\[
\sum_{n=0}^{\infty}(\ell_{t+n-\lambda}+\dotsb+\ell_{t+n})^{1/k}
\le
\sum_{n=0}^{\infty}\pa{\ell_{t+n-\lambda}^{1/k}+\dotsb+\ell_{t+n}^{1/k}}
\le
(\lambda+1)\sum_{n=0}^{\infty}\ell_{t+n-\lambda}^{1/k},
\]
and by H\"{o}lder's inequality 
\begin{align*}
	\sum_{n=0}^{\infty}\ell_{t+n-\lambda}^{1/k} 
	&=
	\sum_{n=0}^{\infty}
	\frac{1}{(t+n-\lambda)^{\frac{k-1+\epsilon}{k}}}
	\cdot 
	(t+n-\lambda)^{\frac{k-1+\epsilon}{k}}\ell_{t+n-\lambda}^{1/k}\\
	&\le 
	\co{\sum_{n=0}^{\infty}\frac{1}{(t+n-\lambda)^{\frac{k-1+\epsilon}{k-1}}}}^{\frac{k-1}{k}}
	\cdot
	\co{\sum_{n=0}^{\infty}(t+n-\lambda)^{k-1+\epsilon}\ell_{t+n-\lambda}}^{\frac1k}.
\end{align*}
Now, on one hand we have 
\begin{align*}
\sum_{n=0}^{\infty}\frac{1}{(t+n-\lambda)^{\frac{k-1+\epsilon}{k-1}}}
&< 
\int_{-1}^{\infty}\frac{ds}{(t+s-\lambda)^{\frac{k-1+\epsilon}{k-1}}}\\
&= 
\co{\frac{(t+s-\lambda)^{-\frac{\epsilon}{k-1}}}{-\frac{\epsilon}{k-1}}}_{s=-1}^{s=\infty}\\
&=
\frac{k-1}{\epsilon}\cdot\frac{1}{(t-1-\lambda)^{\frac{\epsilon}{k-1}}}.
\end{align*}
Notice that $\{x\in\Omega:u(x)>a^{t-\lambda}\}$ is an open set 
(since $u$ is upper semicontinuous) 
that contains $x_t$, so it has positive Lebesgue measure. Therefore  
\[
0<m(\{a^{t-\lambda}\le u\})=\sum_{n=0}^{\infty}\ell_{t+n-\lambda},
\]
and consequently $\ell_{t+n-\lambda}>0$ for some $n\ge0$. Hence 
\[
0<\sum_{n=0}^{\infty}(t+n-\lambda)^{k-1+\epsilon}\ell_{t+n-\lambda}.
\]
Therefore 
\[
\dist(x_t,\partial\Omega)
<
D\delta(t)
\cdot
\co{\sum_{n=0}^{\infty}(t+n-\lambda)^{k-1+\epsilon}\ell_{t+n-\lambda}}^{\frac1k}.
\]
On the other hand, 
\begin{align*}
	\sum_{n=0}^{\infty}(t+n-\lambda)^{k-1+\epsilon}\ell_{t+n-\lambda}
	&\le 
	\sum_{n=0}^{\infty}
	\int_{\big\{t+n-\lambda\le \log_a^+u <t+n-\lambda+1\big\}}
	[\log_a^+u(x)]^{k-1+\epsilon}\, dx\\
	&= 
	\int_{\{\log_a^+u\ge t-\lambda\}}[\log_a^+u(x)]^{k-1+\epsilon}\, dx\\
	&\le 
	\int_{\{\log_a^+F\ge t-\lambda\}}[\log_a^+F(x)]^{k-1+\epsilon}\, dx,
\end{align*}
where in the first inequality we have used that 
\[
\ell_\nu=\{\nu\le \log_a^+u<\nu+1\},
\]
and in the last inequality we have used that $u\le F$. 
Therefore 
\[
\dist(x_t,\partial\Omega) 
< 
D\delta(t)\cdot\pa{\int_{\{\log_a^+F\ge t-\lambda\}}[\log_a^+F(x)]^{k-1+\epsilon}\, dx}^{\frac1k}.
\]
Now set $H(x):=[\log_a^+F(x)]^{k-1+\epsilon}$ and let $h$ be its distribution function. 
Then 
\begin{align*}
	h(s)
	=
	m(\{[\log_a^+F]^{k-1+\epsilon}>s\})
	=
	m(\{F>\exp(s^\frac{1}{k-1+\epsilon})\})
	=
	f_1(s^\frac{1}{k-1+\epsilon}). 
\end{align*}
Using \eqref{eq:distrib.funct} we get 
\begin{align*}
	\int_{\{\log_a^+F\ge t-\lambda\}}[\log_a^+F(x)]^{k-1+\epsilon}\, dx
	&= 
	\int_{\{H\ge(t-\lambda)^{k-1+\epsilon}\}}H(x)\, dx\\
	&= 
	\int_{(t-\lambda)^{k-1+\epsilon}}^{\infty}h(s)\, ds+(t-\lambda)^{k-1+\epsilon}h\big((t-\lambda)^{k-1+\epsilon}\big)\\
	&=
	\int_{(t-\lambda)^{k-1+\epsilon}}^{\infty}f_1(s^\frac{1}{k-1+\epsilon})\, ds+(t-\lambda)^{k-1+\epsilon}f_1(t-\lambda)\\
	&=
	\psi(t)^k.
\end{align*}
Summing up, $\dist(x_t,\partial\Omega)<D\varphi(t)$, as we wanted to prove. 
\end{proof}

\bibliographystyle{siam}
\bibliography{biblio-new}

\begin{thebibliography}{10}

\bibitem{BelloYak-resolvents}
{\sc G.~Bello and D.~Yakubovich}, {\em Resolvent estimates for a function of a
  linear operator}, to appear,
  \href{https://arxiv.org/abs/2508.03585}{arXiv:2508.03585}.

\bibitem{Bjorn2002}
{\sc A.~Bj\"{o}rn}, {\em Properties of removable singularities for {H}ardy
  spaces of analytic functions}, J. London Math. Soc. (2), 66 (2002),
  pp.~651--670.

\bibitem{Bourbaki-book-real-variable}
{\sc N.~Bourbaki}, {\em Functions of a real variable}, Elements of Mathematics
  (Berlin), Springer-Verlag, Berlin, 2004.
\newblock Elementary theory, Translated from the 1976 French original
  [MR0580296] by Philip Spain.

\bibitem{Carleson63}
{\sc L.~Carleson}, {\em Removable singularities of continuous harmonic
  functions in {$R\sp{m}$}}, Math. Scand., 12 (1963), pp.~15--18.

\bibitem{David1998}
{\sc G.~David}, {\em Unrectifiable {$1$}-sets have vanishing analytic
  capacity}, Rev. Mat. Iberoamericana, 14 (1998), pp.~369--479.

\bibitem{Domar}
{\sc Y.~Domar}, {\em On the existence of a largest subharmonic minorant of a
  given function}, Ark. Mat., 3 (1957), pp.~429--440.

\bibitem{Domar1988}
\leavevmode\vrule height 2pt depth -1.6pt width 23pt, {\em Uniform boundedness
  in families related to subharmonic functions}, J. London Math. Soc. (2), 38
  (1988), pp.~485--491.

\bibitem{Falconer-book}
{\sc K.~Falconer}, {\em Fractal geometry}, John Wiley \& Sons, Inc., Hoboken,
  NJ, second~ed., 2003.
\newblock Mathematical foundations and applications.

\bibitem{Fraser-book}
{\sc J.~M. Fraser}, {\em Assouad dimension and fractal geometry}, vol.~222 of
  Cambridge Tracts in Mathematics, Cambridge University Press, Cambridge, 2021.

\bibitem{Gardiner1991}
{\sc S.~J. Gardiner}, {\em Removable singularities for subharmonic functions},
  Pacific J. Math., 147 (1991), pp.~71--80.

\bibitem{Kenn-Hayman}
{\sc W.~K. Hayman and P.~B. Kennedy}, {\em Subharmonic functions. {V}ol. {I}},
  London Mathematical Society Monographs, No. 9, Academic Press [Harcourt Brace
  Jovanovich, Publishers], London-New York, 1976.

\bibitem{Levinson}
{\sc N.~Levinson}, {\em Gap and {D}ensity {T}heorems}, vol.~Vol. 26 of American
  Mathematical Society Colloquium Publications, American Mathematical Society,
  New York, 1940.

\bibitem{Logunov}
{\sc A.~Logunov}, {\em On the higher dimensional harmonic analog of the
  {L}evinson {$\log\log$} theorem}, C. R. Math. Acad. Sci. Paris, 352 (2014),
  pp.~889--893.

\bibitem{LogunPapaz}
{\sc A.~Logunov and H.~Papazov}, {\em An elliptic adaptation of ideas of
  {C}arleman and {D}omar from complex analysis related to {L}evinson's loglog
  theorem}, J. Math. Phys., 62 (2021), pp.~Paper No. 061510, 10.

\bibitem{Melnikov-Verdera}
{\sc M.~S. Melnikov and J.~Verdera}, {\em A geometric proof of the {$L^2$}
  boundedness of the {C}auchy integral on {L}ipschitz graphs}, Internat. Math.
  Res. Notices,  (1995), pp.~325--331.

\bibitem{NazarovTolsaVolberg2014}
{\sc F.~Nazarov, X.~Tolsa, and A.~Volberg}, {\em The {R}iesz transform,
  rectifiability, and removability for {L}ipschitz harmonic functions}, Publ.
  Mat., 58 (2014), pp.~517--532.

\bibitem{Riihentaus2000}
{\sc J.~Riihentaus}, {\em Removable sets for subharmonic functions}, Pacific J.
  Math., 194 (2000), pp.~199--208.

\bibitem{Singman1984}
{\sc D.~Singman}, {\em Removable singularities for {$n$}-harmonic functions and
  {H}ardy classes in polydiscs}, Proc. Amer. Math. Soc., 90 (1984),
  pp.~299--302.

\bibitem{Tolsa-2003}
{\sc X.~Tolsa}, {\em Painlev\'e's problem and the semiadditivity of analytic
  capacity}, Acta Math., 190 (2003), pp.~105--149.

\bibitem{Tolsa-review2006}
\leavevmode\vrule height 2pt depth -1.6pt width 23pt, {\em Analytic capacity,
  rectifiability, and the {C}auchy integral}, in International {C}ongress of
  {M}athematicians. {V}ol. {II}, Eur. Math. Soc., Z\"urich, 2006,
  pp.~1505--1527.

\end{thebibliography}
\end{document}